\documentclass{amsart}
\usepackage{amsfonts,latexsym,graphicx,amssymb,amsmath,url,amsthm}
\newtheorem{theorem}{Theorem}
\newtheorem{lemma}{Lemma}
\newtheorem{corollary}{Corollary}

\theoremstyle{definition}
\theoremstyle{remark}
\newtheorem*{remark}{{\bf{Remark}}}
\newcommand{\N}{\mathbb N}
\newcommand{\Z}{\mathbb Z}

\begin{document}

\title{Longer gaps between values of binary quadratic forms}
\author[R. Dietmann] {Rainer Dietmann}
\address{Department of Mathematics, Royal Holloway,
University of London, Egham, Surrey, TW20 0EX, United Kingdom}
\email{rainer.dietmann@rhul.ac.uk} 
\author[C. Elsholtz]{Christian Elsholtz}
\address{Institute of Analysis and Number Theory,
Graz University of Technology,
Kopernikusgasse 24/II, Graz,
A-8010 Graz, Austria}
\email{elsholtz@math.tugraz.at}
\subjclass{11A07, 11N25}
\keywords{Numbers represented by binary quadratic forms; sums of two
squares; large gaps}
\begin{abstract}
We prove new lower bounds on large gaps between integers which are sums of 
two squares, or are represented by {\emph{any}}
 binary quadratic form of discriminant $D$,
improving results of Richards.
Let $s_1, s_2, \ldots$ be the sequence of positive integers,
arranged in increasing order, that are
representable by \emph{any} binary quadratic form of fixed
discriminant $D$, then 
\[
  \limsup_{n \rightarrow \infty} \frac{s_{n+1}-s_n}{\log s_n}
  \ge \frac{\varphi(|D|)}{2|D|(1+\log \varphi(|D|))}\gg \frac{1}{\log \log |D|},
\]
improving a lower bound of $\frac{1}{|D|}$ of Richards.
In the special case of sums of two squares, we improve Richards's bound of
$1/4$ to $\frac{195}{449}=0.434\ldots$.
We also generalize Richards's result in another direction:
Let $d=2^r d'$ where $d'$ is odd.
For all $k\in \N$ there exists a smallest positive integer $y_k$ such that
none of the integers $y_k+j^d, 1\leq j \leq k$, is a sum of two squares.
Moreover,
\[
  \limsup_{k \rightarrow \infty}
  \frac{k}{\log y_k} \ge \frac{1}{4d'}.
\]
\end{abstract}
\maketitle
\section{Introduction}
Let
\[
   (s_1, s_2, s_3, s_4, s_5, s_6 \ldots)=(0,1,2,4,5,8,\ldots)
\]
denote the sequence 
of integers, in increasing order,
which can be written as a sum of two squares of integers.
The question of the size of large gaps between these integers 
was investigated by Tur\'{a}n and Erd\H{o}s \cite {Erdos:1951},
Warlimont \cite{Warlimont:1976} and Richards \cite{Richards:1982};
see also \cite{hooley}, \cite{bw}, \cite{maynard} and \cite{bc} for
related or more recent work by Hooley, Balog and Wooley, Maynard,
and Bonfoh and Enyi.
Erd\H{o}s writes that Tur\'{a}n proved that infinitely often
\[
  s_{n+1}-s_n \gg \frac{\log s_n}{\log \log s_n}
\]
holds, which Erd\H{o}s \cite{Erdos:1951} improved to
\begin{equation}
\label{bungalow}
  s_{n+1}-s_n \gg \frac{\log s_n}{\sqrt{\log \log s_n}}.
\end{equation}
In fact, Erd\H{o}s's result was a bit more general, and
Warlimont \cite{Warlimont:1976}
independently obtained the same estimate \eqref{bungalow}, again
in a more general context
for sequences with hypotheses slightly different from \cite{Erdos:1951}.
In a very short and elegant paper
Richards \cite{Richards:1982} improved this further to
\begin{equation}
\label{besichtigung}
  \limsup_{n \rightarrow \infty} \frac{s_{n+1}-s_n}{\log s_n}
  \ge \frac{1}{4}.  
\end{equation}
In fact, the result he obtained was again more general:
Fix a fundamental discriminant $D$ and denote by $s_1, s_2, \ldots$
the integers, in increasing order, representable by
\emph{any} binary quadratic form of discriminant $D$. Then
\begin{equation}
\label{norden}
  \limsup_{n \rightarrow \infty} \frac{s_{n+1}-s_n}{\log s_n}
  \ge \frac{1}{|D|}.  
\end{equation}
The special case $D=-4$ corresponds to sums of two squares and
recovers \eqref{besichtigung}. 

Apparently, Richards's record has not been broken since 1982.
In this paper we obtain the
following improvements to \eqref{besichtigung} and \eqref{norden}.
\begin{theorem}
\label{staines}
Let $s_1<s_2<\ldots$ be the sequence of positive integers that are sums
of two squares. Then
\[
  \limsup_{n \rightarrow \infty} \frac{s_{n+1}-s_n}{\log s_n}
  \ge \frac{195}{449} = 0.434\ldots
\]
\end{theorem}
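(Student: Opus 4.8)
The plan is to build, for arbitrarily large $k$, a block $y+1,\dots,y+k$ of consecutive non-sums-of-two-squares with $y$ as small as possible, and to read the $\limsup$ off from the ratio of $k$ to $\log y$; the whole improvement over Richards will come from constructing such a block more efficiently.

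First I would record the elementary criterion: $n$ is a sum of two squares precisely when every prime $p\equiv3\pmod4$ divides $n$ to an even power. Hence each of the congruence conditions $n\equiv3\pmod4$ and $n\in\{p,2p,\dots,(p-1)p\}\pmod{p^2}$ (for $p\equiv3\pmod4$, forcing $v_p(n)=1$) already guarantees that $n$ is not a sum of two squares. Making $y+1,\dots,y+k$ all non-representable therefore amounts to choosing finitely many such congruences whose union \emph{covers} the interval $\{1,\dots,k\}$. Given such a covering, the Chinese Remainder Theorem produces $y\le 4\prod_p p^2$ for which $y+1,\dots,y+k$ are all non-representable; the gap straddling this block then has length at least $k+1$ while the index $m$ with $s_m\le y$ satisfies $\log s_m\le\log\bigl(4\prod_p p^2\bigr)$, so that
\[
\frac{s_{m+1}-s_m}{\log s_m}\ \ge\ \frac{k+1}{2\log2+2\sum_p\log p}.
\]

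The heart of the matter is thus a covering-efficiency optimization: cover $\{1,\dots,k\}$ while keeping $\sum_p\log p$ as small as possible relative to $k$. Here I would lean on the modulus-$4$ class, which deletes a full quarter of the interval at the negligible cost $\log 4$, and then deploy a finite, carefully chosen set of small primes $p\equiv3\pmod4$, using for each prime all $p-1$ of its available classes modulo $p^2$. Balancing these contributions to extract the best constant is a finite combinatorial problem of set-cover / linear-programming type, which I expect to resolve by a computer-assisted search over coverings followed by a rigorous certification of the resulting pattern; the optimum is what should replace Richards's $1/4$ by the explicit rational lower bound $\tfrac{195}{449}=0.434\ldots$. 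To turn a single long gap into a genuine $\limsup$, I would then realize the optimal pattern inside a family whose interval length $k$ tends to infinity while the ratio above converges to the optimal constant, so that the blocks of non-representable integers recur for arbitrarily large $s_m$.

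The step I expect to be the true obstacle is completing the covering with \emph{no} leftover positions. Deleting almost all of $\{1,\dots,k\}$ is routine, but the few stubborn residues that survive the small primes are exactly what cost Erd\H{o}s and Warlimont a factor $\sqrt{\log\log s_n}$ and held those bounds below a fixed multiple of $\log s_n$; eliminating that loss, and then pushing the remaining constant all the way up to $\tfrac{195}{449}$, is where the real work lies.
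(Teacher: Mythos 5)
There is a genuine gap: everything after your reduction to ``cover $\{1,\dots,k\}$ by congruence certificates'' is deferred to an unspecified computer search, and the route you sketch for that search cannot succeed. First, a \emph{bounded} set of small primes can never complete the covering. Your moduli ($4$ and the $p^2$) are pairwise coprime, and each prime's $p-1$ classes mod $p^2$ all lie inside a single class mod $p$; so by CRT the set of $j$ avoiding your chosen class mod $4$ and avoiding each prime's class mod $p$ has density $\frac34\prod_p\bigl(1-\frac1p\bigr)>0$ and is periodic mod $4\prod_p p$. Hence every interval of length $4\prod_p p$ contains uncovered positions, and for large $k$ the prime set must grow with $k$: one needs essentially all primes $p\equiv 3\pmod 4$ up to $4k$, one for each position $j$ with $4j-1$ prime, and these primes of size comparable to $k$ dominate the cost. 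Your accounting ($\log 4$ plus $2\sum_p\log p$ over small primes) misses this main term entirely. Second, the moduli $p^2$ (forcing $v_p=1$) leak: a position such as $j=86$, where $4j-1=343=7^3$, is caught by no prime certificate under the natural alignment and not by the class mod $4$ either; this is exactly why Richards and the paper impose $4y\equiv -1\pmod{p^{\beta(p)+1}}$ with $p^{\beta(p)}\le 4k$, so that the full (odd) valuation of $4j-1$ transfers to $y+j$. Even with both defects repaired, this framework reproduces only Richards's constant $\frac14$, not $\frac{195}{449}$.

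The improvement in the paper comes from a mechanism absent from your proposal: \emph{discarding large primes}, not optimizing small ones. One fixes a large $\ell$, imposes $2^\ell\mid y$, and uses as free certificates the whole family $S_\ell$ of classes $2^ab\pmod{2^\ell}$ with $a\le\ell-2$, $b\equiv 3\pmod 4$ (relative density tending to $\frac12$, not the $\frac14$ of the single class mod $4$). A prime $p$ with $\frac45k<p\le 4k$ is needed only when $4j-1=p$, so it can be omitted from the CRT product whenever $p\bmod 2^{\ell+2}$ lies in $T_{\ell+2}=\{4s-1 \bmod 2^{\ell+2}: s\in S_\ell\}$ --- i.e.\ half of all large primes. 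Primes with $\frac49k<p\le\frac45k$ occur only as $4j-1=p$ or $5p$, and primes with $\frac4{13}k<p\le\frac49k$ only as $p$, $5p$ or $9p$; omitting these requires residue classes $s$ for which $5\tau(s)$ (resp.\ both $5\tau(s)$ and $9\tau(s)$, where $\tau(s)=4s-1$) remain in $T_{\ell+2}$, which is precisely what the paper's inductively constructed sets $U_\ell\subset V_\ell$ and $W_\ell\subset R_\ell$ (of densities $\frac18$ and $\frac1{32}$) provide. The resulting cost is
$\frac12\bigl(\frac4{13}+\bigl(\frac49-\frac4{13}\bigr)\frac{15}{16}+\bigl(\frac45-\frac49\bigr)\frac34+\bigl(4-\frac45\bigr)\frac12\bigr)=\frac12\cdot\frac{449}{195}$
per unit of $k$, which is where $\frac{195}{449}$ comes from. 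No set-cover or LP search over small-prime classes can produce this number, because the saving lives entirely in the interaction between the large primes and the $2$-adic multiplicative structure; your proposal contains neither ingredient.
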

\begin{theorem}
\label{thames}
Let $D$ be a fundamental discriminant, i.e. $D \equiv 1 \pmod 4$ and
$D$ being squarefree, or $D \equiv 0 \pmod 4$, $\frac{D}{4}$ being
squarefree and $\frac{D}{4} \equiv 2 \pmod 4$ or $\frac{D}{4} \equiv
3 \pmod 4$. Further, let $(s_1, s_2, \ldots)$ be the sequence of positive
integers, in increasing order, that are
representable by \emph{any} binary quadratic form of discriminant
$D$. Then
\[
  \limsup_{n \rightarrow \infty} \frac{s_{n+1}-s_n}{\log s_n}
  \ge \frac{\varphi(|D|)}{2|D|(1+\log \varphi(|D|))},
\]
where $\varphi$ denotes Euler's totient function.
\end{theorem}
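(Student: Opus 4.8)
The plan is to reduce Theorem~\ref{thames} to the explicit construction of long blocks of consecutive non-representable integers, and then to optimise such a construction using the inert primes of $D$. First I would record the classical representability criterion coming from genus theory: for $\gcd(n,2D)=1$, the integer $n$ is represented by \emph{some} binary quadratic form of discriminant $D$ if and only if $v_q(n)$ is even for every prime $q$ with $\left(\frac{D}{q}\right)=-1$ (the \emph{inert} primes), the finitely many ramified primes $q\mid D$ and the prime $2$ being handled separately. Consequently, to force $n$ to be non-representable it suffices to arrange $q\parallel n$ for a single inert prime $q$, i.e. $n\equiv aq\pmod{q^2}$ with $1\le a\le q-1$.

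Next I would set up the gap as a covering problem. Fixing a finite set $\mathcal{P}$ of inert primes and a length $L$, I would use the Chinese Remainder Theorem to choose $m$ modulo $\prod_{q\in\mathcal P}q^2$ so that to every $j\in\{1,\dots,L\}$ there is assigned some $q=q(j)\in\mathcal P$ with $q\parallel(m+j)$. Then $m+1,\dots,m+L$ are all non-representable, so $s_{n+1}-s_n\ge L$ for the index $n$ with $s_n\le m<s_{n+1}$, while $\log m\le\sum_{q\in\mathcal P}2\log q$. Since the defining congruences are taken modulo the $q^2$, the whole block recurs in every interval of length $\prod_{q\in\mathcal P}q^2$; letting $\mathcal P$ and $L$ grow through a suitable sequence then yields
\[
  \limsup_{n\to\infty}\frac{s_{n+1}-s_n}{\log s_n}\ \ge\ \frac{L}{2\sum_{q\in\mathcal P}\log q}.
\]

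The heart of the matter is to choose $\mathcal P$ and the assignment $j\mapsto q(j)$ so as to maximise $L\big/\sum_{q\in\mathcal P}\log q$, and this is where the two features of $D$ enter. The inert primes occupy exactly $\varphi(|D|)/2$ of the reduced residue classes modulo $|D|$, and each inert prime $q$ can only cover the positions $j$ lying in a single class modulo $q$. Covering the whole block $\{1,\dots,L\}$ by such arithmetic progressions, organised according to the $\varphi(|D|)/2$ admissible classes modulo $|D|$ and ordered by size, should lead to a greedy estimate in which the cost is controlled by a harmonic-type sum $\sum_{i\le\varphi(|D|)/2}1/i\le 1+\log\varphi(|D|)$, while the density of inert primes per period of length $|D|$ contributes the factor $\varphi(|D|)/(2|D|)$. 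Balancing these against $\sum_{q\in\mathcal P}\log q$ is what I expect to produce the constant $\frac{\varphi(|D|)}{2|D|(1+\log\varphi(|D|))}$.

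I expect the main obstacle to be making this covering argument uniform in $D$: in the worst case the small primes can all be split, so one cannot rely on any particularly small inert prime and must instead control the distribution of primes in the $\varphi(|D|)/2$ inert classes modulo $|D|$ (via Dirichlet's theorem and equidistribution in the relevant ranges) well enough to guarantee the greedy covering at the required efficiency. Two further technical points will need care but should not affect the constant: upgrading the divisibility condition $q\mid(m+j)$ to the exact-power condition $q\parallel(m+j)$, which is handled by working modulo $q^2$ and re-covering by an additional inert prime the few positions where $q^2\mid(m+j)$; and disposing of the finitely many ramified primes $q\mid D$ together with the prime $2$ in the representability criterion.
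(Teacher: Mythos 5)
Your first two steps (the odd-power inert-prime criterion, which is the paper's Lemma~\ref{welle}, and the CRT set-up with $\log m\le 2\sum_{q\in\mathcal P}\log q$) match the paper's skeleton. The gap is in the third step, which is the actual content of the theorem: you never construct the assignment $j\mapsto q(j)$, and the mechanism you propose for it (greedy covering, justified by equidistribution of inert primes in residue classes) cannot reach the required efficiency. The obstruction is overlap loss, not equidistribution. Only about half of all primes are inert, so if you place one progression modulo $q$ for each inert prime $q\le T$, choosing offsets one prime at a time, the best that counting arguments of this kind can guarantee is governed by the Mertens product $\prod_{q\le T,\ q\,\mathrm{inert}}(1-1/q)\asymp(\log T)^{-1/2}$: about $L/\sqrt{\log T}$ positions survive uncovered. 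Each survivor then needs its own prime, and the sum of $2\log q$ over that many distinct primes is $\gg (L/\sqrt{\log T})\log L$. Optimising over $T$ still leaves a total cost $\gg L\sqrt{\log L}$, so your ratio $L\big/\bigl(2\sum\log q\bigr)$ tends to $0$. This $\sqrt{\log}$ loss is exactly the Erd\H{o}s--Warlimont barrier --- it reproduces \eqref{bungalow}, not a positive constant --- and it is precisely what Richards's trick was invented to circumvent. Your plan to restore exact divisibility by re-covering the positions with $q^2\mid m+j$ only adds further survivors of the same kind.

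The idea you are missing is that the covering must not be built position by position; it comes for free from factorisations. Fix $r$ with $\left(\frac{D}{r}\right)=-1$ and impose the single compatible system $|D|y\equiv r\pmod{p^{\beta(p)+1}}$ for all primes $p$ put into the modulus, where $p^{\beta(p)}\le L<p^{\beta(p)+1}$ and now $L=|D|(k+1)$, $k$ being the block length. Then $|D|(y+j)\equiv|D|j+r$, and periodicity of the Kronecker symbol (Lemma~\ref{hitze}) gives $\left(\frac{D}{|D|j+r}\right)=\left(\frac{D}{r}\right)=-1$, so the \emph{fixed} integer $|D|j+r\le L$ automatically has some prime factor $p$ with $\left(\frac{D}{p}\right)=-1$ to an odd power $\gamma\le\beta(p)$; since the congruence is taken modulo $p^{\beta(p)+1}$, the exact divisibility $p^\gamma$ passes to $y+j$ with no mop-up and no $q^2$ issue, and Lemma~\ref{welle} finishes. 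Every position is thus covered, and overlaps cost nothing. The refinement that produces the constant --- and the true origin of the harmonic sum you guessed --- is the cofactor restriction: if such a $p$ lies in $(L/\ell_{i+1},L/\ell_i]$, where $\ell_1<\ell_2<\cdots$ are the positive integers coprime to $D$, then $|D|j+r=p\ell$ forces $\ell\in\{\ell_1,\dots,\ell_i\}$, hence $p$ lies in one of only $i$ residue classes modulo $|D|$. Summing $i/\varphi(|D|)$ over these ranges plus the contribution of the small primes gives $\log P\le(2+o(1))\frac{L}{\varphi(|D|)}\left(1+\log\varphi(|D|)\right)$, i.e.\ the stated constant. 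Without the non-residue $r$, the multiplication by $|D|$, and this cofactor restriction, your outline cannot be completed into a proof.
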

Note that
\[
  \frac{\varphi(|D|)}{|D|} \gg \frac{1}{\log \log |D|},
\]
implying that
\[
  \limsup_{n \rightarrow \infty} \frac{s_{n+1}-s_n}{\log s_n}
  \gg \frac{1}{\log |D| \log \log |D|}.
\]
This is a significant improvement over the bound $1/D$ by Richards
and now the dependence on $|D|$
has become very mild.

Our approach largely follows that of Richards, 
but has a much more careful analysis
which prime factors of medium size are required for the argument.

Let us include Richards's proof of \eqref{besichtigung} first,
as its simplicity helps
with understanding. Fix any $\varepsilon>0$ and then
fix a sufficiently large gap size $k$.
For each prime $p \leq 4k, p \equiv
3 \pmod 4$ let $\beta=\beta(p)$ be the highest power with
$p^{\beta}\leq 4k$. Let
\begin{equation}
\label{product}
  P=\prod_{p \leq 4k,\, p \equiv 3 \pmod 4} p^{\beta(p)+1}.
\end{equation}
Define $y \in \{1, \ldots, P\}$ by $4y\equiv -1 \pmod P$.
Then as shown below, none of the integers in
\[
  I=\{y+1, \ldots ,y+k\}
\]
is the sum of two squares.
By the prime number theorem in arithmetic progressions
(see for example formula (17.1) in \cite{IK}), as $k$ and thus
$P$ is sufficiently large in terms of $\varepsilon$, we have
\[
  P<(4k)^{2 (1+\varepsilon)(2k/\log 4k)}=\exp ((1+\varepsilon)4k),
\]
whence 
\[
  k>\frac{\log P}{4(1+\varepsilon)}.
\]
Since $4y \equiv -1 \pmod P$, we have that
\[4(y+j)\equiv 4j-1 \pmod P, \text{ for } 1\leq j \leq k.\]
Now $4j-1$ is exactly divisible by some prime power
$p^{\alpha}$ with $p \equiv 3 \pmod 4$ and $\alpha$ odd.
As $\alpha \leq \beta(p)$, and $P$ is divisible by $p^{\beta+1}$, it follows
that $y+j$ is also exactly divisible by $p^{\alpha}$, and therefore $y+j$ is
indeed not the sum of two squares.
This proves \eqref{besichtigung}. In fact,
the argument generalizes to yield the following result on long gaps
between sums of two squares in sparse sequences.
\begin{theorem}
\label{shift}
Let $d=2^r d'$ where $d'$ is odd.
For all $k\in \N$ there exists a smallest positive integer $y_k$ such that
none of the integers $y_k+j^d, 1\leq j \leq k$, is a sum of two squares.
Moreover,
\[
  \limsup_{k \rightarrow \infty}
  \frac{k}{\log y_k} \ge \frac{1}{4d'}.
\]
\end{theorem}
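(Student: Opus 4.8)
The plan is to mimic Richards's construction, replacing the block $\{y+1,\ldots,y+k\}$ by the sparse set $\{y+j^d : 1 \le j \le k\}$ and choosing the modulus by the Chinese Remainder Theorem so that each $y+j^d$ acquires a prime factor $p \equiv 3 \pmod 4$ to an odd power; recall that a positive integer fails to be a sum of two squares precisely when some such prime divides it to an odd power. Existence of $y_k$ is then immediate: choosing distinct primes $p_1,\ldots,p_k \equiv 3 \pmod 4$ (Dirichlet) and solving $y \equiv -j^d + p_j \pmod{p_j^2}$ for $1 \le j \le k$ forces $v_{p_j}(y+j^d)=1$, so no $y+j^d$ is a sum of two squares, and the smallest positive such $y$ exists by well-ordering. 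The whole difficulty is quantitative: producing a \emph{small} admissible $y$, namely one with $\log y_k \le (1+\varepsilon)\,4 d' k$.

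For the quantitative bound I would set $P = \prod_{p \le X,\, p \equiv 3 \pmod 4} p^{\beta(p)+1}$ with $p^{\beta(p)} \le X < p^{\beta(p)+1}$ and $X$ to be optimised around $4 d' k$, and determine $y$ modulo $4P$ by a covering construction. The even values of $j$ are disposed of for free: since $d \ge 2$ we have $4 \mid j^d$ for even $j$, so imposing $y \equiv 3 \pmod 4$ gives $y + j^d \equiv 3 \pmod 4$, which is never a sum of two squares. (The excluded case $d=1$ is exactly Richards's theorem, with $d'=1$.) It remains to catch the odd $j$. For a prime $p \equiv 3 \pmod 4$ the choice of $y \bmod p^2$ singles out the residues $c \bmod p$ with $c^d \equiv -y \pmod p$, and every $j$ in these classes satisfies $p \mid y+j^d$; writing $j = c + pt$ shows that $v_p(y+j^d)=1$ on all but one class modulo $p^2$ (away from the finitely many $p \mid d$). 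The number of such residues $c$ is $\gcd(d,p-1)$, and here the shape $d = 2^r d'$ enters decisively: since $p \equiv 3 \pmod 4$ forces $v_2(p-1)=1$, one has $\gcd(d,p-1) = 2^{\min(r,1)}\gcd(d',(p-1)/2)$, so the power of two in $d$ contributes only a bounded factor and the governing quantity is $d'$. This is why $d'$, and not $d$, appears in the final bound.

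I would then choose the residues $y \bmod p^{\beta(p)+1}$ so that, as $p$ runs over the primes $\equiv 3 \pmod 4$ up to $X$, the covered classes exhaust all odd $j \in \{1,\ldots,k\}$ to an odd $p$-power. Granting this, the size estimate is exactly Richards's: each factor satisfies $p^{\beta(p)+1} \le X^2$, and by the prime number theorem in arithmetic progressions there are at most $(1+\varepsilon)\tfrac{X}{2\log X}$ primes $p \equiv 3 \pmod 4$ up to $X$, whence $\log P \le (1+\varepsilon) X$. Taking $X = 4 d' k$ and $y = y_k < 4P$ gives $\log y_k \le (1+\varepsilon)\,4 d' k$ for all sufficiently large $k$, so $\tfrac{k}{\log y_k} \ge \tfrac{1}{4 d'} - o(1)$, which yields the claimed lower bound on the limsup.

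The main obstacle is precisely the covering step above: arranging that every odd $j \le k$ lands in one of the covered residue classes while keeping its $p$-adic valuation odd. Two points need care. First, the thin exceptional classes modulo $p^2$, on which $v_p(y+j^d) \ge 2$, must themselves be recaptured by another prime of the system. Second, the odd $j$ whose factorisations offer few small primes $\equiv 3 \pmod 4$ — for instance those $j$ that are themselves sums of two squares — are the hardest to cover and are what control the admissible size of $X$. Balancing the contribution of these medium-sized primes against the budget $\log P \approx X$ is what pins the constant down to $4 d'$, and it is the only place where more than Richards's original idea is required.
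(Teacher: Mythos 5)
Your framework (a CRT modulus built from primes $p\equiv 3 \pmod 4$, the size estimate $\log P \le (1+\varepsilon)X$, existence of $y_k$ by well-ordering) is sound, but the proof has a genuine gap, and it sits exactly where you flag it: the covering step is not a technical balancing act to be finished later, it is the whole theorem, and nothing in your proposal supplies it. For each prime $p$ your choice of $y \bmod p^{\beta(p)+1}$ covers at most $\gcd(d,p-1)$ residue classes of $j$ modulo $p$ (and only if $-y$ is arranged to be a $d$-th power residue modulo $p$); you then need the union of these locally chosen sets, over $p \le X$, to contain every odd $j \le k$, with the extra requirement that the relevant $p$-adic valuation come out odd. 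That is a genuine set-cover problem in which the conditions at different primes interact through the single unknown $y$, and no construction is given; nor is there any argument for why $X = 4d'k$ is a sufficient budget for such a cover to exist. Your computation $\gcd(d,p-1)=2^{\min(r,1)}\gcd(d',(p-1)/2)$ explains heuristically why $d'$ should be the governing quantity, but it is not a mechanism that produces the cover.

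What is missing is the algebraic device that makes the problem linear again, as in Richards's original argument, rather than a covering problem. The paper takes $P=\prod_{p\le 4kd',\ p\equiv 3 \pmod 4}p^{\beta(p)+1}$ and imposes the single congruence $(4d')^d y \equiv -1 \pmod P$. Then
\[
  (4d')^d\bigl(y+j^d\bigr) \equiv (4d'j)^d-1 \pmod P,
\]
and $(4d'j)^d-1$ has the divisor $4d'j-1 \equiv 3 \pmod 4$, which for \emph{every} $j \le k$ (odd or even, with no case distinction) is exactly divisible by some $p^{\alpha}$ with $p \equiv 3 \pmod 4$, $\alpha$ odd, and $p^{\alpha} \le 4d'k$, so this $p$ automatically lies in the range of the product defining $P$. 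The only real work is to check that this exact power survives passage from the factor $4d'j-1$ to the full number $(4d'j)^d-1$: this is Lemma~\ref{32grad}, the coprimality
\[
  \gcd\left(\frac{(4d'j)^d-1}{4d'j-1},\,4d'j-1\right)=1,
\]
proved by factoring $\frac{(a^{d'})^{2^r}-1}{a^{d'}-1}=\prod_{u=0}^{r-1}\bigl(a^{2^ud'}+1\bigr)$ and observing that $\frac{a^{d'}-1}{a-1}\equiv d' \pmod{a-1}$ together with $\gcd(4d'j-1,d')=1$. This identity is also where $d'$ truly enters: the primes needed are exactly those dividing $4d'j-1 \le 4d'k$, so $\log P\le(1+\varepsilon)4d'k$ and the constant $1/(4d')$ follows. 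Without this identity, or a proof of your covering claim, the quantitative statement remains unproved.
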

\begin{remark}
When $d=2^r$, i.e. $d'=1$, the quantitative bound is as good as
Richards'; in particular, the special
case $d=1$ recovers exactly Richards's result \eqref{besichtigung}.
\end{remark}

As the details of the proofs of Theorems \ref{staines} and \ref{thames}
are quite technical, we outline the underlying ideas and structure of 
the proofs.
An even more elaborate
analysis along the same lines
in case of Theorem \ref{thames} could probably lead
to a further slight improvement; in case of Theorem \ref{shift}
we refrained from introducing our new refinement in order to keep the
statement clean as in this case 
our focus was more on the qualitative side of the
result.
The key observation for an improvement in the argument above is
as follows:
We concluded that $4j-1$ is exactly divisible by some prime power
$p^\alpha$ with $p \equiv 3 \pmod 4$ and odd $\alpha$,
say $4j-1=p^\alpha r$. If $4j-1<4k$ is
composite, then $p<\frac{4}{5}k$. Primes $p$ with
$\frac{4}{5}k \le p \le 4k$ included in the product \eqref{product}
therefore are only used once in the argument, namely when $p=4j-1$.
Now integers in $I=\{y+1, \ldots, y+k\}$ which are congruent to
$3\pmod 4$ are obviously not sums of two squares.
Hence, additionally assuming that $y\equiv 0 \pmod 4$, we conclude that for 
$j\equiv 3\pmod 4$ we trivially know that $y+j$ is not a sum of two squares.
As $4j-1 \equiv 11 \pmod {16}$ in this case, we deduce that
primes $p\equiv 11\pmod {16}$, with $4k/5 \le p \le 4k$, are not needed
in the product \eqref{product}.
In other words, 
for fixed $k$ one can use a smaller $P$, or for a given size of $y$,
one can find a larger $k$ than in Richards's argument. This basic
approach just described in fact would replace the $\frac{1}{4}$
in \eqref{besichtigung} by
\[
  \left(2 \times \frac{1}{2} \left(
  \frac{4}{5}+\frac{3}{4} \left(4-\frac{4}{5}\right) \right) \right)^{-1}
  = \frac{5}{16} = 0.3125,
\]
but it can be further refined in two ways:
First, considering higher powers of $2$ one finds a larger proportion of
residue classes for $j$ one can dispense with; for example
the residue classes $j\equiv 3,6,7$ modulo $8$ immediately rule out
that $y+j$ is a sum of two squares, provided that $y \equiv 0 \pmod 8$.
Therefore the primes $p=4j-1 \ge \frac{4}{5}k$
with $p \equiv 11$, $23$, or $27 \pmod {32}$
are not needed. Secondly, also smaller primes $p$ can be considered;
for example primes $p$ with $\frac{4}{9}k \le p < \frac{4}{5}k$ can
only occur in the argument if either $4j-1=p$ or $4j-1=5p$. One
residue class one can disregard here is for example $p \equiv 11
\pmod {32}$, as for such $p$ both $p$ and $5p$ are modulo $32$ in
the set of residue classes $11$, $23$ and $27$ which were ruled out above
for $4j-1$. Implementing and optimizing these kinds of
improvements lead to the result in Theorem \ref{staines}.

A similar idea is used to prove Theorem \ref{thames}. Here instead of
$4j-1$ the progression $|D|j+r$ shows up, for some $r$ with
$(\frac{D}{r})=-1$. Whereas Richards \cite{Richards:1982} uses all
primes $p$ with $(\frac{D}{p})=-1$, it turns out that for large
primes $p$ used only once, namely when $|D|j+r=p$
(i.e. $L/\ell_2<p\le L$), only $p$ with
$p \equiv r \pmod {|D|}$ need to be considered. Similarly, for
somewhat smaller primes $p$ used only twice, only two residue
classes modulo $|D|$ have to be covered, and so on. This leads to
a considerably smaller product $P$.

\smallskip
\emph{Notation:} We say that a prime power $p^\alpha$
\emph{exactly divides} an integer $n$ if $p^\alpha$ divides $n$ but
$p^{\alpha+1}$ does not. The symbol $\left( \frac{m}{n} \right)$
always denotes the Kronecker symbol of $m$ over $n$.
Also,
as the proof of Theorem \ref{staines} in section \ref{regen} 
requires quite heavy notation, we have a summary
of abbreviations in appendix \ref{smalltalk} and some examples for the
sets of residue classes introduced in appendix \ref{english}.

\smallskip
\emph{Acknowledgments:} Large parts of this paper were written during
mutual visits of the authors, who would like to express thanks to
the TU Graz, Royal Holloway, and the ETH Z\"urich for support and
favourable working conditions. We also want to thank Igor Shparlinski
for helpful comments regarding the exposition of this paper.

\section{Longer gaps between sums of two squares}
\label{regen}
In this section we will prove Theorem \ref{staines}.
For all $\ell \in \N$ with $\ell \ge 2$ define
\begin{equation}
\label{aquaracer}
  S_\ell = \{2^ab \in \{1, \ldots, 2^\ell\}:
  a \le \ell-2, b \equiv 3 \pmod 4\}.
\end{equation}
Clearly
\begin{equation}
\label{tag}
  S_{\ell} \subset S_{\ell+1}
\end{equation}
for all $\ell \ge 2$.
In the following it is convenient to use the projection
\begin{align*}
  \pi_\ell : \; & \Z \rightarrow \{1, \ldots, 2^\ell\}\\
  & x \mapsto n \in \{1, \ldots, 2^\ell\} \; \text{such that} \;
  x \equiv n \pmod {2^\ell}.
\end{align*}
\begin{lemma}
\label{lem1}
For $\ell \ge 2$ we have
\[
  \#S_\ell = 2^{\ell-1}-1.
\]
Moreover, if $\pi_\ell(n) \in S_\ell$, then $n$ is not the sum of two
squares.
\end{lemma}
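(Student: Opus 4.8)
The plan is to treat the two assertions separately, both resting on the unique factorization of an integer into a power of two times an odd part.

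For the cardinality, I would first note that writing each element of $S_\ell$ as $2^a b$ with $b$ odd is unambiguous, so the map $(a,b) \mapsto 2^a b$ is injective and counting $S_\ell$ amounts to counting pairs $(a,b)$ with $a \le \ell - 2$, $b \equiv 3 \pmod 4$ and $2^a b \le 2^\ell$. For fixed $a$ the last inequality reads $b \le 2^{\ell - a}$, and since $a \le \ell - 2$ the bound $2^{\ell - a}$ is a multiple of $4$; hence exactly $2^{\ell - a - 2}$ values of $b$ in $\{1, \dots, 2^{\ell-a}\}$ are congruent to $3 \pmod 4$. Summing the resulting geometric series,
\[
  \#S_\ell = \sum_{a=0}^{\ell-2} 2^{\ell-a-2} = 2^{\ell-1}-1,
\]
which is the claimed count.

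For the second assertion I would unfold the definition of the projection. Suppose $\pi_\ell(n) = 2^a b \in S_\ell$, so that $n \equiv 2^a b \pmod{2^\ell}$ with $b \equiv 3 \pmod 4$ and $a \le \ell - 2$; writing $n = 2^\ell t + 2^a b$ and factoring out $2^a$ gives $n = 2^a m$ with $m = 2^{\ell - a} t + b$. The key point is that $\ell - a \ge 2$, so $2^{\ell - a} t \equiv 0 \pmod 4$ and therefore $m \equiv b \equiv 3 \pmod 4$; in particular $m$ is odd, so $2^a$ is precisely the power of two dividing $n$ and $m$ is its odd part. It then remains to invoke the classical fact that an odd integer congruent to $3 \pmod 4$ must have some prime factor $p \equiv 3 \pmod 4$ occurring to an odd power — otherwise its odd part would be a product of primes $\equiv 1 \pmod 4$ together with even powers of primes $\equiv 3 \pmod 4$, all of which are $\equiv 1 \pmod 4$, forcing $m \equiv 1 \pmod 4$. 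Such a prime $p$ then exactly divides $n$ to an odd power, and Fermat's two-squares theorem shows that $n$ is not a sum of two squares.

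The routine parts are the geometric summation and the congruence bookkeeping; the only step requiring genuine care is checking that the condition $a \le \ell - 2$ is exactly what guarantees $m \equiv 3 \pmod 4$, i.e. that reducing $n$ modulo $2^\ell$ does not alter the residue class modulo $4$ of the odd part. This is the crux that links the otherwise artificial-looking cutoff $a \le \ell - 2$ in the definition of $S_\ell$ to the arithmetic we actually need.
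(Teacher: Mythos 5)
Your proposal is correct and follows essentially the same route as the paper's own proof: both count $S_\ell$ by fixing $a \le \ell-2$ and summing $\sum_{a=0}^{\ell-2} 2^{\ell-a-2} = 2^{\ell-1}-1$, and both write $n = 2^a(2^{\ell-a}t + b)$ to see that the odd part of $n$ is $\equiv 3 \pmod 4$, forcing a prime $p \equiv 3 \pmod 4$ to an exact odd power. Your write-up merely makes explicit two steps the paper leaves implicit (injectivity of $(a,b)\mapsto 2^a b$ and the deduction that a number $\equiv 3 \pmod 4$ has a prime factor $\equiv 3 \pmod 4$ to an odd power), which is fine.
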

\begin{proof}
Suppose that $c \in \{1, \ldots, 2^\ell\}$ is of the form
$c=2^ab$ for $a \le \ell-2$ and
$b \equiv 3 \pmod 4$. If $n \in \N$
satisfies $n \equiv c \pmod {2^\ell}$, then
$n=2^\ell m+2^ab=2^a(2^{\ell-a}m+b)$ for a suitable integer $m$. Now
$2^{\ell-a}m+b \equiv 3 \pmod 4$, so $n$ must be divisible by a prime $p$
with $p \equiv 3 \pmod 4$ to an exact odd power, whence $n$ cannot be
a sum of two squares. It is immediate to check that the number of such $c$
of the form above is
\[
  \sum_{a=0}^{\ell-2} 2^{\ell-a-2} = 2^{\ell-1}-1,
\]
because for fixed $a \le \ell-2$ there are exactly
$2^{\ell-a-2}$ elements of $\{1, \ldots, 2^{\ell-a}\}$ that are
congruent to $3$ modulo $4$.
\end{proof}
Define the map $\tau$ by
\[
  \tau : \Z \rightarrow \Z; \quad j \mapsto 4j-1,
\]
and for $\ell \in \N$, $\ell \ge 2$ define
\[
  T_{\ell+2} = \tau(S_\ell) = \pi_{\ell+2}(\tau(S_\ell))
  \subset \{1, \ldots, 2^{\ell+2}\}.
\]
Again, one observes that
\begin{equation}
\label{27minuten}
  T_\ell \subset T_{\ell+1}
\end{equation}
for all $\ell \ge 4$.
\begin{lemma}
\label{20bahnen}
Let $\ell \ge 2$ and
$s \in S_{\ell}$ with $s \ne 3 \times 2^{\ell-2}$.
Then we have $\pi_\ell(s+2^{\ell-1}) \in S_\ell$.
\end{lemma}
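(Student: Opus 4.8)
The plan is to reduce the whole statement to a single congruence modulo a power of $2$. First I would write $s = 2^a b$ with $b$ odd, so that membership $s \in S_\ell$ in \eqref{aquaracer} means precisely $a \le \ell-2$ and $b \equiv 3 \pmod 4$. The role of the excluded value $3 \times 2^{\ell-2}$ is to rule out the extreme case $a = \ell-2$: indeed, if $a = \ell-2$ then $2^{\ell-2} b \le 2^\ell$ forces $b \le 4$, and the only $b \equiv 3 \pmod 4$ in that range is $b = 3$, giving $s = 3 \times 2^{\ell-2}$. Hence for every $s$ permitted by the hypothesis we may assume $a \le \ell-3$, and in particular $a + 2 \le \ell - 1$.

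Next I would determine the reduced value $t := \pi_\ell(s + 2^{\ell-1})$ by examining it modulo $2^{a+2}$. Since $a+2 \le \ell-1$, both $2^{\ell-1}$ and the modulus $2^\ell$ of $\pi_\ell$ are divisible by $2^{a+2}$, and therefore
\[
  t \equiv s + 2^{\ell-1} \equiv s \pmod{2^{a+2}}.
\]
Writing $b = 3 + 4m$ gives $s = 2^a b = 3 \cdot 2^a + m \cdot 2^{a+2}$, so $s \equiv 3 \cdot 2^a \pmod{2^{a+2}}$, and consequently $t \equiv 3 \cdot 2^a \pmod{2^{a+2}}$. This congruence says exactly that $t = 2^a b'$ for some $b' \equiv 3 \pmod 4$, with $2^a$ exactly dividing $t$. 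Since $a \le \ell-3 \le \ell-2$ and $t \in \{1, \ldots, 2^\ell\}$ by definition of $\pi_\ell$, this exhibits $t$ in the form required by \eqref{aquaracer}, whence $\pi_\ell(s + 2^{\ell-1}) \in S_\ell$.

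The only delicate point is the wraparound caused by $\pi_\ell$ when $s + 2^{\ell-1}$ exceeds $2^\ell$; naively one would split into the cases where a multiple of $2^\ell$ is subtracted or not, tracking the odd part through each. Working modulo $2^{a+2}$ rather than with the actual integer dispenses with this entirely, because subtracting $2^\ell$ leaves the residue there undisturbed. It is precisely at the excluded element $s = 3 \times 2^{\ell-2}$ that this mechanism breaks down: there $a + 2 = \ell$, the modulus $2^{a+2} = 2^\ell$ no longer annihilates the shift $2^{\ell-1}$, and indeed $\pi_\ell(3 \times 2^{\ell-2} + 2^{\ell-1}) = 2^{\ell-2}$ has odd part $1 \not\equiv 3 \pmod 4$, so it leaves $S_\ell$. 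This confirms that the exclusion in the hypothesis is sharp.
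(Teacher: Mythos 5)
Your proof is correct and takes essentially the same route as the paper's: the exclusion of $s = 3 \times 2^{\ell-2}$ forces $a \le \ell-3$, and then a direct congruence computation shows that adding $2^{\ell-1}$ preserves the form $2^a b'$ with $b' \equiv 3 \pmod 4$. Your device of working modulo $2^{a+2}$ (so that both the shift $2^{\ell-1}$ and the reduction modulo $2^\ell$ disappear) is a slightly tidier way of dispatching the wraparound issue, which the paper handles by carrying the odd part $b + 2^{\ell-1-a}$ through the computation modulo $2^\ell$.
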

\begin{proof}
As $s \in S_\ell$, $s \ne 3 \times 2^{\ell-2}$, it follows that
$s \equiv 2^a b \pmod {2^\ell}$
where $a \le \ell - 3$ and $b \equiv 3 \pmod 4$.
Hence $s+2^{\ell-1} \equiv 2^a(b+2^{\ell-1-a}) \pmod {2^\ell}$,
where $b+2^{\ell-1-a} \equiv b \equiv 3 \pmod 4$, whence
$\pi_\ell(s+2^{\ell-1}) \in S_\ell$. 
\end{proof}
\begin{corollary}
\label{cor1}
Let $\ell \ge 2$ and $t \in T_{\ell+2}$ with $t \ne 3 \times 2^{\ell}-1$.
Then $\pi_{\ell+2}(t+2^{\ell+1}) \in T_{\ell+2}$.
\end{corollary}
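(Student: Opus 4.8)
The plan is to transport Lemma \ref{20bahnen} through the affine map $\tau$. Since every element of $T_{\ell+2}$ is by definition of the form $\pi_{\ell+2}(\tau(s)) = 4s-1$ for some $s \in S_\ell$, and since $\tau$ restricted to $S_\ell \subset \{1,\ldots,2^\ell\}$ is injective (the images $4s-1$ run through distinct values in $\{3,\ldots,2^{\ell+2}-1\} \subset \{1,\ldots,2^{\ell+2}\}$, so $\pi_{\ell+2}$ fixes them), I would first recover the unique $s \in S_\ell$ with $\pi_{\ell+2}(\tau(s)) = t$. The exclusion $t \ne 3 \times 2^{\ell}-1$ then translates precisely into $s \ne 3 \times 2^{\ell-2}$, because $\tau(3 \times 2^{\ell-2}) = 3 \times 2^\ell - 1$ and $\tau$ is injective on $S_\ell$, so the two excluded elements correspond under $\tau$.

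With $s \in S_\ell$, $s \ne 3 \times 2^{\ell-2}$ in hand, I would apply Lemma \ref{20bahnen} to obtain $s' := \pi_\ell(s + 2^{\ell-1}) \in S_\ell$, so that $\tau(s') \in T_{\ell+2}$ by the definition of $T_{\ell+2}$. The remaining step is a short congruence computation confirming that $\tau(s')$ is the sought-for element, i.e. $\pi_{\ell+2}(t + 2^{\ell+1}) = \pi_{\ell+2}(\tau(s'))$. Since $s' \equiv s + 2^{\ell-1} \pmod{2^\ell}$, multiplying by $4$ gives $4s' \equiv 4s + 2^{\ell+1} \pmod{2^{\ell+2}}$, whence $\tau(s') = 4s' - 1 \equiv (4s - 1) + 2^{\ell+1} = t + 2^{\ell+1} \pmod{2^{\ell+2}}$. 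Reducing both sides via $\pi_{\ell+2}$ yields $\pi_{\ell+2}(t + 2^{\ell+1}) = \pi_{\ell+2}(\tau(s')) = \tau(s') \in T_{\ell+2}$, as desired.

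The conceptual heart of the argument, which I would flag as the only non-routine point, is the observation that because $\tau$ multiplies its argument by $4 = 2^2$, the shift by $2^{\ell-1}$ that Lemma \ref{20bahnen} supplies at the level of $S_\ell$ (modulo $2^\ell$) is scaled exactly to the shift by $2^{\ell+1}$ appearing in the corollary (modulo $2^{\ell+2}$). The main obstacle is therefore purely bookkeeping: one must keep careful track of which modulus each projection uses and verify that reduction mod $2^\ell$, followed by $\tau$, followed by reduction mod $2^{\ell+2}$, is consistent, i.e. that $\tau$ descends to a well-defined map between these two residue systems. Once this compatibility and the coincidence of the excluded elements under $\tau$ are checked, the corollary follows immediately from Lemma \ref{20bahnen}.
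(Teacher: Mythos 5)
Your proof is correct and is exactly the argument the paper intends: the corollary is stated there without proof as an immediate consequence of Lemma \ref{20bahnen}, obtained by transporting that lemma through $\tau$, so that the shift by $2^{\ell-1}$ modulo $2^\ell$ becomes the shift by $2^{\ell+1}$ modulo $2^{\ell+2}$. Your checks of the injectivity of $\tau$ on $S_\ell$ and of the correspondence $\tau(3 \times 2^{\ell-2}) = 3 \times 2^\ell - 1$ between the excluded elements supply precisely the bookkeeping the paper leaves implicit.
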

\begin{lemma}
\label{pilz}
Let $\ell \ge 3$. Then $3 \times 2^\ell-1 \not \in T_{\ell+1}$.
\end{lemma}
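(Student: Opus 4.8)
The plan is to unwind the definition of $T_{\ell+1}$ and reduce the claim to an elementary size comparison. Substituting $\ell \mapsto \ell-1$ in the defining relation $T_{\ell+2}=\pi_{\ell+2}(\tau(S_\ell))$ gives $T_{\ell+1}=\pi_{\ell+1}(\tau(S_{\ell-1}))$; the hypothesis $\ell\ge 3$ is exactly what guarantees $\ell-1\ge 2$, so that $S_{\ell-1}$, and hence $T_{\ell+1}$, is defined at all. The one structural fact I would extract is that $T_{\ell+1}$, being a set of values of the projection $\pi_{\ell+1}$, is contained in its range $\{1,\dots,2^{\ell+1}\}$.

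With this in hand the argument is immediate. I would write $3\cdot 2^\ell-1=2^{\ell+1}+(2^\ell-1)$ and note that for $\ell\ge 3$ we have $2^\ell-1\ge 1$, so $3\cdot 2^\ell-1>2^{\ell+1}$. Hence $3\cdot 2^\ell-1$ lies outside $\{1,\dots,2^{\ell+1}\}$, and therefore outside $T_{\ell+1}$. Put differently, $\pi_{\ell+1}$ can never output the value $3\cdot 2^\ell-1$, so no element of $S_{\ell-1}$ is carried onto it by $\tau$, which is precisely the assertion.

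It is worth recording the equivalent preimage formulation, as it explains the role of this element downstream. Since $\tau(S_{\ell-1})\subset\{11,\dots,2^{\ell+1}-5\}$ already lies in range, no reduction occurs and membership $3\cdot 2^\ell-1\in T_{\ell+1}$ would force $3\cdot 2^\ell-1=4s-1$ for some $s\in S_{\ell-1}$, i.e. $s=3\cdot 2^{\ell-2}$. But $3\cdot 2^{\ell-2}=\tfrac{3}{2}\cdot 2^{\ell-1}>2^{\ell-1}$ exceeds the ambient range of $S_{\ell-1}$, so $s\notin S_{\ell-1}$; indeed $3\cdot 2^{\ell-2}$ is exactly the maximal-valuation exceptional element of $S_\ell$ singled out in Lemma \ref{20bahnen} and Corollary \ref{cor1}, and it is its $\tau$-image that appears here. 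This confirms that $3\cdot 2^\ell-1$ is genuinely a new element of $T_{\ell+2}$, absent from the lower copy $T_{\ell+1}$. Because the whole content is a size comparison after unwinding definitions, there is no serious obstacle; the only point demanding care is the index bookkeeping, namely keeping straight that $T_{\ell+1}$ is built from $S_{\ell-1}$ (and not from $S_\ell$ or $S_{\ell+1}$) and that its ambient modulus is $2^{\ell+1}$.
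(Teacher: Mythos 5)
Your proof is correct and takes essentially the same approach as the paper's: both observe that $T_{\ell+1}\subset\{1,\ldots,2^{\ell+1}\}$ while $3\times 2^{\ell}-1 = 2^{\ell+1}+(2^{\ell}-1)>2^{\ell+1}$, so the element lies outside the ambient range. Your additional preimage discussion is accurate but unnecessary for the lemma itself.
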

\begin{proof}
Let $x=3 \times 2^\ell-1$.
Since $T_{\ell+1} \subset \{1, \ldots, 2^{\ell+1}\}$ and
$x>2^{\ell+1}$, clearly $x \not \in T_{\ell+1}$.
\end{proof}
Let
\begin{equation}
\label{heuer}
  U_3 = \{3\} \subset S_3,
\end{equation}
and for $\ell \ge 4$, recursively define
\begin{equation}
\label{rek}
  U_\ell = U_{\ell-1} \cup \{u+2^{\ell-1}: u \in U_{\ell-1}\}
  \cup \{3 \times 2^{\ell-2}\}.
\end{equation}
It follows by induction from \eqref{aquaracer}, \eqref{tag},
Lemma \ref{20bahnen}, \eqref{heuer} and \eqref{rek}
that $U_\ell \subset S_\ell$ for all $\ell \ge 3$.
Moreover, for $\ell \ge 2$ define
\[
  V_\ell = \{s \in S_\ell: \pi_{\ell+2}(5\tau(s)) \in T_{\ell+2}\}.
\]
The important property of $V_\ell$ used later is that if
$x \in T_\ell$ is also in the subset $\tau(V_\ell)$ of $T_\ell$, then
$\pi_\ell(5x) \in T_\ell$ as well.

In a similar way, define
\[
  W_5=\{24\} \subset U_5,
\]
and, for $\ell \ge 6$,
\[
  W_\ell = W_{\ell-1} \cup \{u+2^{\ell-1}: u \in W_{\ell-1}\}
  \cup \{3 \times 2^{\ell-2}\}.
\]
As above one shows that
\[
  W_\ell \subset U_\ell
\]
for all $\ell \ge 5$. Further, for $\ell \ge 2$ let
\[
  R_\ell = \{s \in S_\ell: \pi_{\ell+2}(5\tau(s)) \in T_{\ell+2}
  \text{ and } \pi_{\ell+2}(9\tau(s)) \in T_{\ell+2}\}.
\]
Note that
\[
  R_\ell \subset V_\ell
\]
for all $\ell \ge 5$, and that the important property of $R_\ell$ used
later is that if
$x \in T_\ell$ is also in the subset $\tau(R_\ell)$ of $T_\ell$, then
$\pi_\ell(5x) \in T_\ell$ and $\pi_\ell(9x) \in T_\ell$ as well.

\begin{lemma}
\label{hoch_pokern}
Let $\ell \ge 3$. Then $\#U_\ell = 2^{\ell-2}-1$ and $U_\ell \subset V_\ell$.
\end{lemma}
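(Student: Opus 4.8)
The plan is to prove the two assertions separately, both by induction on $\ell$ anchored at the base case $\ell=3$ from \eqref{heuer} and driven by the recursion \eqref{rek}. Before anything else I would record a convenient reformulation of membership in $V_\ell$. Unwinding $\tau(s)=4s-1$ and $T_{\ell+2}=\pi_{\ell+2}(\tau(S_\ell))$, the condition $\pi_{\ell+2}(5\tau(s))\in T_{\ell+2}$ asks for some $s'\in S_\ell$ with $5(4s-1)\equiv 4s'-1\pmod{2^{\ell+2}}$; rewriting this as $4s'\equiv 20s-4\pmod{2^{\ell+2}}$ and dividing by $4$ shows it is equivalent to $s'\equiv 5s-1\pmod{2^\ell}$, i.e. to $\pi_\ell(5s-1)\in S_\ell$. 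Since we already know $U_\ell\subset S_\ell$, proving $U_\ell\subset V_\ell$ thus reduces to showing $\pi_\ell(5s-1)\in S_\ell$ for every $s\in U_\ell$.

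For the cardinality I would check that the three sets on the right of \eqref{rek} are pairwise disjoint. The inclusion $U_{\ell-1}\subset S_{\ell-1}\subset\{1,\dots,2^{\ell-1}\}$ places $U_{\ell-1}$ in the bottom half and its translate $\{u+2^{\ell-1}:u\in U_{\ell-1}\}$ in the top half $\{2^{\ell-1}+1,\dots,2^\ell\}$, so these two are disjoint; the extra point $3\cdot 2^{\ell-2}$ exceeds $2^{\ell-1}$ and so lies outside $U_{\ell-1}$, and it equals $u+2^{\ell-1}$ only if $u=2^{\ell-2}$, which is impossible because $2^{\ell-2}$ has odd part $1\not\equiv 3\pmod 4$ and hence is not in $S_{\ell-1}\supseteq U_{\ell-1}$. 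With disjointness and the injectivity of the translation the recursion gives $\#U_\ell=2\#U_{\ell-1}+1$, and together with $\#U_3=1$ this yields $\#U_\ell=2^{\ell-2}-1$ by induction.

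For $U_\ell\subset V_\ell$ I would induct on $\ell$, the base $\ell=3$ being the direct check $\pi_3(5\cdot3-1)=\pi_3(14)=6=2\cdot3\in S_3$. For the step (with $\ell\ge4$) I take $s\in U_\ell$ and treat the three parts of \eqref{rek}. If $s\in U_{\ell-1}$, the inductive hypothesis gives $\pi_{\ell-1}(5s-1)\in S_{\ell-1}$, say $5s-1\equiv 2^a b\pmod{2^{\ell-1}}$ with $a\le\ell-3$ and $b\equiv3\pmod4$; since $\ell-1-a\ge2$ both the valuation $a$ and the residue modulo $4$ of the odd part survive reduction modulo $2^\ell$, so $\pi_\ell(5s-1)\in S_\ell$. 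If $s=u+2^{\ell-1}$ with $u\in U_{\ell-1}$, I would use $5\cdot2^{\ell-1}\equiv2^{\ell-1}\pmod{2^\ell}$ to write $\pi_\ell(5s-1)=\pi_\ell\bigl((5u-1)+2^{\ell-1}\bigr)$; the previous case gives $\pi_\ell(5u-1)\in S_\ell$ with $2$-adic valuation $a\le\ell-3$, hence $\pi_\ell(5u-1)\ne3\cdot2^{\ell-2}$, and Lemma \ref{20bahnen} then yields $\pi_\ell(5s-1)\in S_\ell$. Finally, for the exceptional point $s=3\cdot2^{\ell-2}$ a direct computation shows $5s-1=15\cdot2^{\ell-2}-1\equiv3\pmod4$ is odd (here $\ell\ge4$), so $\pi_\ell(5s-1)\in S_\ell$.

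The congruence bookkeeping is routine; the one genuinely structural step is the translate case $s=u+2^{\ell-1}$, where the whole point of the recursion \eqref{rek} is to keep the elements away from the single exceptional value excluded in Lemma \ref{20bahnen}. The hard part will therefore be the verification that $\pi_\ell(5u-1)\ne3\cdot2^{\ell-2}$, equivalently that its $2$-adic valuation stays $\le\ell-3$, since this is exactly the hypothesis that lets Lemma \ref{20bahnen} propagate membership in $S_\ell$ under translation by $2^{\ell-1}$. Once this is in place the induction closes cleanly.
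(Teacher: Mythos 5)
Your proof is correct. It shares the paper's inductive skeleton — induction on $\ell$ along the recursion \eqref{rek}, with the exceptional element $3\times 2^{\ell-2}$ handled by a direct computation and the remaining elements by the inductive hypothesis plus a translation-by-$2^{\ell-1}$ lemma — but you run it in a genuinely different frame. The paper works throughout with $T_{\ell+2}$ modulo $2^{\ell+2}$: in its Case II it treats $s \in U_{\ell-1}$ and $s = u+2^{\ell-1}$ simultaneously via $\pi_{\ell-1}(s) \in U_{\ell-1}$, gets $\pi_{\ell+1}(5\tau(s)) \in T_{\ell+1}$ from the hypothesis, and then needs \eqref{27minuten}, Lemma \ref{pilz} and Corollary \ref{cor1} to pass from modulus $2^{\ell+1}$ to $2^{\ell+2}$. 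You instead divide out the factor $4$ once and for all: your equivalence $s \in V_\ell \Leftrightarrow \pi_\ell(5s-1) \in S_\ell$ (for $s \in S_\ell$) is correct, since $5(4s-1) \equiv 4s'-1 \pmod{2^{\ell+2}}$ holds iff $s' \equiv 5s-1 \pmod{2^\ell}$, and it lets the entire induction live inside $S_\ell$ modulo $2^\ell$, with Lemma \ref{20bahnen} as the only translation tool; the exclusion of the exceptional element needed to invoke that lemma is secured by tracking that the $2$-adic valuation of $\pi_\ell(5u-1)$ stays at most $\ell-3$, which is exactly the role Lemma \ref{pilz} plays one level up in the paper. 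Your cardinality argument likewise replaces the paper's auxiliary inductive invariant (that no element of $U_{\ell-1}$ is divisible by $2^{\ell-2}$) with the direct observation that $2^{\ell-2} \notin S_{\ell-1}$, legitimate because $U_{\ell-1} \subset S_{\ell-1}$ is established before the lemma. What your route buys is lighter bookkeeping: no $\tau$, no $T$-sets, a smaller modulus, and no strengthened inductive hypothesis. What the paper's route buys is that its conclusion is stated in the form actually consumed later in the proof of Theorem \ref{staines}, namely that $x \in \tau(V_\ell) \cap T_\ell$ forces $\pi_\ell(5x) \in T_\ell$; with your formulation one would translate back through $\tau$ at the point of use, a one-line step but one worth making explicit.
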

\begin{proof}
We prove the lemma by induction on $\ell$. For $\ell=3$, it is
immediate to check that
\[
  U_3 = V_3 = \{3\},
\]
whence
\[
  \#U_3 = 1 = 2^{\ell-2}-1,
\]
and no element in $U_3$ is divisible by $2^{\ell-1}$.
Now let $\ell \ge 4$, and suppose that $\#U_{\ell-1} = 2^{\ell-3}-1$,
no element in $U_{\ell-1}$ is divisible by $2^{\ell-2}$ and
$U_{\ell-1} \subset V_{\ell-1}$. The three sets on the right hand
side of \eqref{rek} are disjoint as $U_{\ell-1} \subset
\{1, \ldots, 2^{\ell-1}\}$, $\{u+2^{\ell-1}: u \in U_{\ell-1}\}
\subset \{2^{\ell-1}+1, \ldots, 2^{\ell}\}$, and no element
in $U_{\ell-1}$ is divisible by $2^{\ell-2}$. Hence
\[
  \#U_\ell = 2\#U_{\ell-1} + 1 = 2^{\ell-2}-1
\]
and no element in $U_\ell$ is divisible by $2^{\ell-1}$.
To prove $U_\ell \subset V_\ell$,
let $s \in U_\ell$.

\smallskip
\noindent
\textbf{Case I:} $s=3 \times 2^{\ell-2}$.
Then
\begin{align*}
  \frac{1}{4} \left( 5\tau(s)-3 \times 2^{\ell+2} + 1 \right)
  &= \frac{1}{4} \left( 5 \times (4 \times 3 \times 2^{\ell-2}-1)
  - 3 \times 2^{\ell+2} + 1 \right) \\ &= 3 \times 2^{\ell-2}-1,
\end{align*}
hence
\[
  5\tau(s)\equiv 4 \times (3 \times 2^{\ell-2}-1)-1
  \pmod {2^{\ell+2}}.
\]
Now $3 \times 2^{\ell-2} - 1 \equiv 3 \pmod 4$, so
$3 \times 2^{\ell-2}-1 \in S_\ell$, whence
$4 \times (3 \times 2^{\ell-2}-1)-1 \in T_{\ell+2}$, so
$\pi_{\ell+2}(5\tau(s)) \in T_{\ell+2}$.

\smallskip
\noindent
\textbf{Case II:} $s \ne 3 \times 2^{\ell-2}$.
Then by definition of $U_\ell$, we have $\pi_{\ell-1}(s) \in
U_{\ell-1}$, so $\pi_{\ell+1}(5\tau(s)) \in T_{\ell+1}$ by our
inductive assumption $U_{\ell-1} \subset V_{\ell-1}$.
Hence $\pi_{\ell+2}(5\tau(s)) \in T_{\ell+1}$ or
$\pi_{\ell+2}(5\tau(s)) \in \{u+2^{\ell+1}:u \in T_{\ell+1}\}$.
If $\pi_{\ell+2}(5\tau(s)) \in T_{\ell+1}$ then by \eqref{27minuten}
we immediately obtain
$\pi_{\ell+2}(5\tau(s)) \in T_{\ell+2}$ as required, whereas
if $\pi_{\ell+2}(5\tau(s)) \in \{u+2^{\ell+1}:u \in T_{\ell+1}\}$
then \eqref{27minuten}, Lemma \ref{pilz} and
Corollary \ref{cor1} again
yield $\pi_{\ell+2}(5\tau(s)) \in T_{\ell+2}$.
\end{proof}
\begin{lemma}
\label{qd}
Let $\ell \ge 5$. Then $\#W_\ell = 2^{\ell-4}-1$ and $W_\ell \subset R_\ell$.
\end{lemma}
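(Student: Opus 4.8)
The plan is to run an induction on $\ell$ that closely parallels the proof of Lemma \ref{hoch_pokern}, exploiting that $W_\ell$ obeys exactly the same recursion as $U_\ell$ but started one layer higher, at $\ell=5$ with the single seed $W_5=\{24\}=\{3\times 2^{5-2}\}$. For the cardinality $\#W_\ell=2^{\ell-4}-1$ I would carry along the auxiliary invariant that no element of $W_\ell$ is divisible by $2^{\ell-1}$; this makes the three sets on the right of the recursion defining $W_\ell$ pairwise disjoint (the first lying in $\{1,\ldots,2^{\ell-1}\}$, the second in $\{2^{\ell-1}+1,\ldots,2^{\ell}\}$, and $3\times 2^{\ell-2}$ avoiding the second precisely because no $u\in W_{\ell-1}$ equals $2^{\ell-2}$), so that $\#W_\ell=2\#W_{\ell-1}+1$, and the invariant propagates since neither $3\times 2^{\ell-2}$ nor any shift $u+2^{\ell-1}$ with $2^{\ell-2}\nmid u$ is divisible by $2^{\ell-1}$. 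The base case $\#W_5=1=2^{1}-1$ and $16\nmid 24$ is immediate.

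For the inclusion $W_\ell\subset R_\ell$ I would first dispose of the easy half: since $W_\ell\subset U_\ell$ (as noted in the text) and $U_\ell\subset V_\ell$ by Lemma \ref{hoch_pokern}, every $s\in W_\ell$ already satisfies $\pi_{\ell+2}(5\tau(s))\in T_{\ell+2}$. Thus the only new content is the second defining condition of $R_\ell$, namely $\pi_{\ell+2}(9\tau(s))\in T_{\ell+2}$, and I would establish it by mimicking the two-case argument of Lemma \ref{hoch_pokern} verbatim with $9$ in place of $5$. In Case II, where $s\ne 3\times 2^{\ell-2}$, one has $\pi_{\ell-1}(s)\in W_{\ell-1}\subset R_{\ell-1}$ by induction, whence $9\tau(s)\equiv 9\tau(\pi_{\ell-1}(s))\pmod{2^{\ell+1}}$ gives $\pi_{\ell+1}(9\tau(s))\in T_{\ell+1}$; the passage from $T_{\ell+1}$ up to $T_{\ell+2}$ is then handled exactly as before, using \eqref{27minuten} if the top bit is absent and, if it is present, combining Lemma \ref{pilz} (to see the lifted element differs from $3\times 2^{\ell}-1$) with Corollary \ref{cor1}.

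The main obstacle, and the place where the hypothesis $\ell\ge 5$ is genuinely used, is Case I, $s=3\times 2^{\ell-2}$. Here I would compute $9\tau(s)=27\times 2^{\ell}-9\equiv 3\times 2^{\ell}-9\pmod{2^{\ell+2}}$ and recognize the right-hand side as $\tau(c)$ with $c=3\times 2^{\ell-2}-2=2\,(3\times 2^{\ell-3}-1)$. To conclude $\pi_{\ell+2}(9\tau(s))=\tau(c)\in T_{\ell+2}$ it remains to check $c\in S_\ell$, i.e. that $c=2^ab$ with $b\equiv 3\pmod 4$ and $a\le \ell-2$: here $a=1$ and $b=3\times 2^{\ell-3}-1$, and the congruence $b\equiv 3\pmod 4$ holds precisely because $\ell-3\ge 2$ forces $4\mid 3\times 2^{\ell-3}$. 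This is exactly why the recursion, and hence the induction, must start at $\ell=5$; I would anchor it by the direct verification that $24\in R_5$, using $\tau(24)=95$ together with $\pi_7(5\times 95)=91\in T_7$ and $\pi_7(9\times 95)=87\in T_7$.
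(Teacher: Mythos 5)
Your proof is correct and takes essentially the same route as the paper's: the same induction with the same two-case split, the same lifting from $T_{\ell+1}$ to $T_{\ell+2}$ via \eqref{27minuten}, Lemma \ref{pilz} and Corollary \ref{cor1}, and the same key computation identifying $\pi_{\ell+2}\bigl(9\tau(3\times 2^{\ell-2})\bigr)$ with $\tau\bigl(2(3\times 2^{\ell-3}-1)\bigr)\in T_{\ell+2}$. Your only (harmless) deviation is that you dispatch the $5\tau(s)$ condition once and for all via $W_\ell\subset U_\ell\subset V_\ell$ instead of inside the induction, which is a slight streamlining of what the paper does.
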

\begin{proof}
We use a similar strategy as in the proof of Lemma \ref{hoch_pokern}; the
proof for $\#W_\ell = 2^{\ell-4}-1$ is completely analogous, so let us focus
on the second part $W_\ell \subset R_\ell$.
The case $\ell=5$ is immediately checked directly. Now suppose that
$\ell \ge 6$ and
$W_{\ell-1} \subset R_{\ell-1}$. For $s \ne 3 \times
2^{\ell-2}$ we argue in exactly the same way as in the proof of Lemma
\ref{hoch_pokern}. Therefore let us only discuss the case
$s=3 \times 2^{\ell-2}$. Here
\begin{align*}
  \frac{1}{4} \left( 9\tau(s)-6 \times 2^{\ell+2} + 1 \right) 
  &= \frac{1}{4} \left(9 \times (4 \times 3 \times 2^{\ell-2}-1)-
  6 \times 2^{\ell+2} + 1 \right)\\
  &= 2 \times (3 \times 2^{\ell-3}-1),\\
\end{align*}
hence
\[
  9\tau(s) \equiv 4 \times 2 \times (3 \times 2^{\ell-3}-1)-1
  \pmod {2^{\ell+2}}.
\]
Now $2 \times (3 \times 2^{\ell-3}-1) \equiv 6 \pmod 8$, so
$2 \times (3 \times 2^{\ell-3}-1) \in S_\ell$, thus
$4 \times 2 \times (3 \times 2^{\ell-3}-1)-1 \in T_{\ell+2}$ and
$\pi_{\ell+2}(9\tau(s)) \in T_{\ell+2}$; from the proof of
Lemma \ref{hoch_pokern} we already know that
$\pi_{\ell+2}(5\tau(s)) \in T_{\ell+2}$.
\end{proof}
We now follow the idea of Richards \cite{Richards:1982} already
explained in the introduction.
Let $\varepsilon>0$. Then, in terms of $\varepsilon$,
fix a sufficiently large positive integer $\ell \ge 5$ and a
sufficiently large positive integer $k$, and let
the sets $S_\ell$, $T_{\ell+2}$, $U_\ell$, $V_\ell$, $W_\ell$, $R_\ell$
be defined as above. In the following it is convenient to define
\[
  A:=T_{\ell+2}, \quad B:=\pi_{\ell+2}(\tau(U_\ell)), \quad
  C:=\pi_{\ell+2}(\tau(W_\ell)).
\]
Note that
\[
  C \subset B \subset A
\]
since $W_\ell \subset U_\ell \subset S_\ell$.
By Lemma \ref{lem1}, Lemma \ref{hoch_pokern} and Lemma \ref{qd} we have
\begin{align*}
  \#A & =\#T_{\ell+2} =\#S_\ell=2^{\ell-1}-1, \quad
  \#B=\#U_\ell=2^{\ell-2}-1,\\
  \#C & =\#W_\ell=2^{\ell-4}-1.
\end{align*}
Hence if $\ell$ is chosen sufficiently large in terms of $\varepsilon$, then
\begin{equation}
\label{eq1}
  \frac{\#A}{\varphi(2^{\ell+2})} \ge \frac{1}{4}(1-\varepsilon); \quad
  \frac{\#B}{\varphi(2^{\ell+2})} \ge \frac{1}{8}(1-\varepsilon); \quad
  \frac{\#C}{\varphi(2^{\ell+2})} \ge \frac{1}{32}(1-\varepsilon). \quad
\end{equation}
Now for each prime $p \le 4k$ define
\[
  \beta(p) = \max_{p^m \le 4k} m,
\]
let
\[
  X = (1+\varepsilon) \frac{4}{13} k, \quad
  Y = (1+\varepsilon) \frac{4}{9} k, \quad
  Z = (1+\varepsilon) \frac{4}{5} k, \quad
\]
and let
\begin{align*}
  P = & 2^{\ell}
      \prod_{\substack{p_1 \le X:\\p_1 \equiv 3 \pmod 4}} p_1^{\beta(p)+1}
      \prod_{\substack{X<p_2 \le Y:\\ p_2 \equiv 3 \pmod 4,\\
      \pi_{\ell+2}(p_2) \not \in C}} p_2^{\beta(p)+1} \\
    & \times \prod_{\substack{Y<p_3 \le Z:\\ p_3 \equiv 3 \pmod 4,\\
      \pi_{\ell+2}(p_3) \not \in B}} p_3^{\beta(p)+1}
      \prod_{\substack{Z<p_4 \le 4k:\\ p_4 \equiv 3 \pmod 4,\\
      \pi_{\ell+2}(p_4) \not \in A}} p_4^{\beta(p)+1},
\end{align*}
where $p_1, \ldots, p_4$ denote prime numbers.
Then by the prime number theorem in arithmetic progressions
(see for example formula (17.1) in \cite{IK}), using
the upper bound $p^{\beta(p)+1} \le (4k)^2$, the lower bounds
\eqref{eq1} and the fact that all elements in $A$ are congruent to
$3$ modulo $4$, we obtain
\[
  P \le (4k)^{2(1+\varepsilon)k\alpha/\log(4k)},
\]
where
\begin{align*}
  \alpha & = \frac{1}{2} \left(
  \frac{4}{13} + \left( \frac{4}{9}-\frac{4}{13} \right) \frac{15}{16} +
  \left( \frac{4}{5}-\frac{4}{9} \right) \frac{3}{4} +
  \left(4-\frac{4}{5} \right) \frac{1}{2} \right)\\
  &= \frac{1}{2} \times \frac{449}{195}.
\end{align*}
Hence
\[
  P \le \exp \left((1+\varepsilon) \frac{449}{195}k\right),
\]
so
\begin{equation}
\label{lowball}
  \frac{k}{\log P} \ge \frac{195}{(1+\varepsilon)449}.
\end{equation}
Now use the Chinese Remainder Theorem to find $y \in \{1, \ldots, P\}$
such that
\begin{itemize}
\item[(i)] $2^{\ell} \mid y$,
\item[(ii)] if $p \equiv 3 \pmod 4$ and $p \le X$, then
            $4y \equiv -1 \pmod {p^{\beta(p)+1}}$,
\item[(iii)] if $p \equiv 3 \pmod 4$, $X<p \le Y$ and
             $\pi_{\ell+2}(p) \not \in C$,\\ then $4y \equiv -1 \pmod
             {p^{\beta(p)+1}}$,
\item[(iv)] if $p \equiv 3 \pmod 4$, $Y<p \le Z$ and
             $\pi_{\ell+2}(p) \not \in B$,\\ then $4y \equiv -1 \pmod
             {p^{\beta(p)+1}}$,
\item[(v)] if $p \equiv 3 \pmod 4$, $Z<p \le 4k$ and
             $\pi_{\ell+2}(p) \not \in A$,\\ then $4y \equiv -1 \pmod
             {p^{\beta(p)+1}}$.
\end{itemize}
We claim that
none of the numbers $y+1, \ldots, y+k$ is a sum of two squares,
which together with $y \le P$ and
\eqref{lowball} proves the theorem. To settle the
claim, let $1 \le j \le k$. If $\pi_\ell(j) \in S_\ell$, then by
Lemma \ref{lem1} and property (i) above $y+j$ cannot be a sum
of two squares, so we can assume that $\pi_\ell(j) \not \in S_\ell$, whence
$\pi_{\ell+2}(\tau(j)) \not \in A$.
Now $\tau(j) \equiv 3 \pmod 4$, so $\tau(j)$ must be
divisible by a prime $p$ with $p \equiv 3 \pmod 4$ where
$3 \le p \le 4k-1$ and $p^\gamma \mid \mid \tau(j)$ for odd $\gamma
\le \beta(p)$.

\smallskip
\noindent
\textbf{Case I:} $p \le X$. Then by property (ii) above $4y \equiv -1
\pmod {p^{\beta(p)+1}}$.

\smallskip
\noindent
\textbf{Case II:} $Z<p \le 4k$.
Then $p \equiv 3 \pmod 4$ and $p \mid \tau(j)$
imply that $\tau(j)=p$, so $\pi_{\ell+2}(\tau(j))=
\pi_{\ell+2}(p) \not \in A$
and by property (v) above, we have
$4y \equiv -1 \pmod {p^{\beta(p)+1}}$.

\smallskip
\noindent
\textbf{Case III:} $Y<p \le Z$.
Then $p \equiv 3 \pmod 4$ and $p \mid \tau(j)$ imply that
$\tau(j)=p$ or $\tau(j)=5p$. As above, if $\tau(j)=p$
we conclude that
$\pi_{\ell+2}(p) \not \in A$, whereas if $\tau(j)=5p$  we obtain
$\pi_{\ell+2}(5p) \not \in A$. Writing $p=\tau(s)$ for some
positive integer $s$, we then find that $\pi_\ell(s) \not \in V_\ell$,
whence by Lemma \ref{hoch_pokern} also $\pi_\ell(s) \not \in U_\ell$,
hence $\pi_{\ell+2}(p) \not \in B$. As $B \subset A$, we get
$\pi_{\ell+2}(p) \not \in B$ regardless of whether
$\tau(j)=p$ or $\tau(j)=5p$, so by property (iv) again
$4y \equiv -1 \pmod {p^{\beta(p)+1}}$.

\smallskip
\noindent
\textbf{Case IV:} $X<p \le Y$.
Then $p \equiv 3 \pmod 4$ and $p \mid \tau(j)$ imply
that $\tau(j)=p$ or $\tau(j)=5p$ or $\tau(j)=9p$.
If $\tau(j)=p$, then $\pi_{\ell+2}(p)=\pi_{\ell+2}(\tau(j)) \not \in A$.
Next, if $\tau(j)=5p$, then
$\pi_{\ell+2}(\tau(j))=\pi_{\ell+2}(5p) \not \in A$,
hence as above $\pi_{\ell+2}(p) \not \in B$ by Lemma \ref{hoch_pokern}.
Finally, if $\tau(j)=9p$, then
$\pi_{\ell+2}(\tau(j)) = \pi_{\ell+2}(9p) \not \in A$, hence as above
$\pi_{\ell+2}(p) \not \in C$ by Lemma \ref{qd}.
As $C \subset B \subset A$, we obtain $\pi_{\ell+2}(p) \not \in C$
regardless whether $\tau(j)=p$, $\tau(j)=5p$ or $\tau(j)=9p$,
whence  $4y \equiv -1 \pmod {p^{\beta(p)+1}}$ by property (iii) above.

\smallskip
In all cases,
\[
  4(y+j) \equiv 4j-1 \pmod {p^{\beta(p)+1}},
\]
so $p^\gamma \mid \mid (y+j)$. Since
$p \equiv 3 \pmod 4$ and $\gamma$ is odd, $y+j$ cannot be a sum of two
squares.

\bigskip
\begin{remark} One might wonder if the study of further iterations
leading to analogous sets
$D, E, \ldots$ would reduce the size of $P$ even more.
As far as we see this is not the case because $C \cap 13C = \emptyset$, but
this does not exclude other refinements.
\end{remark}
\section{Longer gaps between numbers representable by binary quadratic
forms of discriminant $D$}
Again, we follow the idea of Richards \cite{Richards:1982}:
Choose a positive integer $r \in \{1, \ldots, |D|\}$
such that the Kronecker symbol $(D/r)$ has value
\[
  \left( \frac{D}{r} \right)=-1.
\]
The following two well known results are provided for easy later reference.
\begin{lemma}
\label{hitze}
For fixed $m \in \mathbb{Z} \backslash\{0\}$
with $m \equiv 0 \pmod 4$ or $m \equiv 1
\pmod 4$, the Kronecker symbol $(\frac{m}{\cdot})$ is periodic of
period dividing $|m|$, i.e. for all $k, n \in \mathbb{Z}$ where
$n \ne 0$ and $n+km \ne 0$ we have
\[
  \left( \frac{m}{n+km} \right) = \left( \frac{m}{n} \right).
\]
\end{lemma}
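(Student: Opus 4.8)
The plan is to recognise the asserted periodicity as the classical statement that $\chi(n)=\left(\frac{m}{n}\right)$ is a real Dirichlet character modulo $|m|$, and to prove this via quadratic reciprocity. The one structural input used throughout is that, for fixed $m\ne 0$, the Kronecker symbol is completely multiplicative in its lower argument: $\left(\frac{m}{n_1n_2}\right)=\left(\frac{m}{n_1}\right)\left(\frac{m}{n_2}\right)$ for all nonzero $n_1,n_2$, and in particular $\left(\frac{m}{-n}\right)=\left(\frac{m}{-1}\right)\left(\frac{m}{n}\right)$ with $\left(\frac{m}{-1}\right)=\operatorname{sign}(m)$. Hence it suffices to control $\chi$ on $-1$, on $2$, and on the odd primes, and then to verify that the assembled function has period dividing $|m|$.

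First I would treat $m\equiv 1\pmod 4$, so $m$ is odd. Applying reciprocity for the Jacobi symbol, together with the supplementary law at $2$, I would establish the ``flip''
\[
  \left(\frac{m}{n}\right)=\left(\frac{n}{|m|}\right)
\]
for every positive $n$, distinguishing $m>0$ and $m<0$; in the latter case one peels off the factor $\left(\frac{-1}{n}\right)$ and uses $|m|\equiv 3\pmod 4$. The hypothesis $m\equiv 1\pmod 4$ is exactly what forces the reciprocity sign $(-1)^{\frac{m-1}{2}\frac{n-1}{2}}$ to vanish, making the flip clean. Both sides are completely multiplicative in $n$ and agree on every prime — on odd primes by reciprocity, on $2$ by the supplementary law since $m^2=|m|^2$ — so they agree for all positive $n$. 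The right-hand side is a Jacobi symbol in its lower slot $|m|$, which manifestly has period dividing $|m|$ and vanishes precisely when $\gcd(n,m)>1$; this settles the claim for positive $n$.

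To reach negative $n$ I combine multiplicativity with the flip: for $0<n<|m|$ one has $\left(\frac{m}{-n}\right)=\operatorname{sign}(m)\left(\frac{n}{|m|}\right)$, while $\left(\frac{m}{|m|-n}\right)=\left(\frac{-1}{|m|}\right)\left(\frac{n}{|m|}\right)$, so the two coincide if and only if $\operatorname{sign}(m)=(-1)^{(|m|-1)/2}$. This identity holds precisely because $m\equiv 1\pmod 4$ (whence $|m|\equiv 1$ for $m>0$ and $|m|\equiv 3$ for $m<0$), and it is the unique point at which the sign of $m$ is reconciled with periodicity across all of $\Z$. For $m\equiv 0\pmod 4$ I would isolate the prime $2$ by writing $m=Df^2$ with $D$ a fundamental discriminant and $f\ge 1$, reducing to the classical fact — valid also for the even fundamental discriminants — that $\left(\frac{D}{\cdot}\right)$ is the real primitive character modulo $|D|$; the coprimality condition $\gcd(n,f)=1$ contributed by the square factor is periodic modulo $\operatorname{rad}(f)$, and $\operatorname{lcm}(|D|,\operatorname{rad}(f))$ divides $|D|f^2=|m|$.

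The step I expect to be the genuine obstacle is not the reciprocity computation but the bookkeeping at the prime $2$ and of the global sign, guaranteeing periodicity for every $n\in\Z$ — negative $n$ and $n$ not coprime to $m$ included — rather than only for positive $n$ coprime to $m$. This is exactly where the hypothesis $m\equiv 0$ or $1\pmod 4$ is indispensable: without it the reconciliation $\operatorname{sign}(m)=(-1)^{(|m|-1)/2}$ and the clean flip both fail, and one recovers only period dividing $4|m|$ instead of $|m|$.
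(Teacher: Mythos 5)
The paper itself offers no argument for this lemma: its entire proof is a one-line citation of Theorem 2.29 in Cohen's book. Measured against that, your proposal is a genuine proof attempt along the standard lines, and your treatment of the case $m \equiv 1 \pmod 4$ is correct and essentially complete: the flip $\left(\frac{m}{n}\right)=\left(\frac{n}{|m|}\right)$ for positive $n$ (both sides completely multiplicative in $n$, agreeing at odd primes by Jacobi reciprocity, at $2$ by the supplementary law since $m^2=|m|^2$, and at primes dividing $m$ where both vanish), combined with the reconciliation $\operatorname{sign}(m)=(-1)^{(|m|-1)/2}$ --- which for odd $m$ is exactly equivalent to $m \equiv 1 \pmod 4$ --- to extend periodicity from positive to negative $n$, is sound bookkeeping, and your two subcases $m>0$ and $m<0$ work out as you describe.

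The genuine gap is in the case $m \equiv 0 \pmod 4$. The reduction $m = Df^2$, $\left(\frac{m}{n}\right)=\left(\frac{D}{n}\right)\left(\frac{f}{n}\right)^2$, is fine, but the fact you then invoke --- that $\left(\frac{D}{\cdot}\right)$ is a character modulo $|D|$ ``valid also for the even fundamental discriminants'' --- is itself an instance of the very lemma being proved (an even fundamental discriminant \emph{is} an $m \equiv 0 \pmod 4$), so as a self-contained argument this half is circular; the prime-$2$ bookkeeping that you yourself single out as the genuine obstacle is never carried out. To close it, argue directly: for even $n$ both $\left(\frac{m}{n}\right)$ and $\left(\frac{m}{n+km}\right)$ vanish, so assume $n$ odd, and write $m=2^em'$ with $m'$ odd and $e \ge 2$, so that $\left(\frac{m}{n}\right)=\left(\frac{2}{n}\right)^e\left(\frac{m'}{n}\right)$. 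The factor $\left(\frac{2}{n}\right)^e=(-1)^{e(n^2-1)/8}$ depends only on $n \bmod 8$ (and is constant if $e$ is even); for the other factor, either $m'\equiv 1 \pmod 4$ and your odd case gives period dividing $|m'|$, or $m'\equiv 3\pmod 4$ and you write $\left(\frac{m'}{n}\right)=\left(\frac{-1}{n}\right)\left(\frac{-m'}{n}\right)$, where $-m'\equiv 1\pmod 4$ is again your odd case and $\left(\frac{-1}{n}\right)$ depends only on $n \bmod 4$; all three elementary factors are given by these residues for negative odd $n$ as well, directly from the definition of the Kronecker symbol at $-1$. Hence the period divides $\operatorname{lcm}(4,|m'|)$ when $e$ is even and $\operatorname{lcm}(8,4,|m'|)$ when $e$ is odd (forcing $e\ge 3$), and in both cases this divides $2^e|m'|=|m|$ --- which is precisely where the hypothesis $4 \mid m$ enters, exactly as your closing remark predicts.
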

\begin{proof}
This is Theorem 2.29 in \cite{Cohen}.
\end{proof}

\begin{lemma}
\label{welle}
Let $D$ be a fundamental discriminant, $n \in \mathbb{N}$ and $p$ be
a prime such that $p^\alpha$ exactly divides $n$ for odd $\alpha$ and
with  $(\frac{D}{p})=-1$. Then $n$ is not representable by
\emph{any} binary
quadratic form of discriminant $D$.
\end{lemma}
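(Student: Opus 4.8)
The plan is to reduce everything to the theory of \emph{primitive} representations. Suppose, for contradiction, that $n$ is represented by some binary quadratic form $f(u,v) = au^2 + buv + cv^2$ of discriminant $D = b^2 - 4ac$, say $n = f(u_0, v_0)$. Writing $e = \gcd(u_0, v_0)$ and $u_0 = e u_1$, $v_0 = e v_1$ with $\gcd(u_1, v_1) = 1$, one obtains $n = e^2 m$ where $m = f(u_1, v_1)$ is represented \emph{primitively} by $f$. The entire argument will then hinge on showing that the prime $p$ cannot divide $m$.

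To see this I would invoke the classical criterion for primitive representability: a positive integer $m$ is primitively represented by some binary quadratic form of discriminant $D$ precisely when the congruence $x^2 \equiv D \pmod{4m}$ is solvable. Only one direction is needed here, and it is elementary. Since $\gcd(u_1, v_1) = 1$, one can complete the column $(u_1, v_1)$ to a matrix in $\mathrm{SL}_2(\Z)$; the associated unimodular substitution carries $f$ to an equivalent form $m x^2 + b' xy + c' y^2$ of the same discriminant, so that $b'^2 - 4mc' = D$ and therefore $b'^2 \equiv D \pmod{4m}$.

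It then remains to rule out $p \mid m$. If $p$ is odd, then $p \mid m$ together with $b'^2 \equiv D \pmod{4m}$ would force $D$ to be a square modulo $p$, contradicting $\left(\frac{D}{p}\right) = -1$. The prime $p = 2$ is the one deserving slightly more care: here $\left(\frac{D}{2}\right) = -1$ means $D \equiv \pm 3 \pmod 8$, and if $2 \mid m$ then $8 \mid 4m$, so solvability of $x^2 \equiv D \pmod{4m}$ would require $D$ to be a square modulo $8$; but the squares modulo $8$ are $0, 1, 4$, so again we reach a contradiction. In either case $p \nmid m$.

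Having established $p \nmid m$, the conclusion is immediate: from $n = e^2 m$ with $p \nmid m$ it follows that the exact power of $p$ dividing $n$ coincides with that dividing $e^2$, which is even, contradicting the hypothesis that $p^\alpha$ exactly divides $n$ with $\alpha$ odd. I expect the only (very mild) obstacle to be the careful treatment of the prime $2$ via the Kronecker symbol, and keeping track of the factor $4$ in the modulus $4m$ throughout; everything else is a direct unwinding of the definitions, which is why this result can fairly be quoted as well known.
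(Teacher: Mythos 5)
Your proof is correct, but it takes a genuinely different route from the paper. The paper invokes the classical representation-number formula (Theorem 3 in \S 8 of Zagier's book): writing $R_D(n)$ for the total number of representations of $n$ by forms of discriminant $D$, one has $R_D(n) = \sum_{\ell \mid n} \left( \frac{D}{\ell} \right)$; multiplicativity of the Kronecker symbol makes $R_D$ multiplicative, and $R_D(p^\alpha)=\sum_{i=0}^{\alpha}(-1)^i=0$ for odd $\alpha$, so $R_D(n)=0$. Your argument avoids this nontrivial counting theorem entirely: you reduce to a primitive representation $n=e^2m$, complete the primitive vector to an $\mathrm{SL}_2(\Z)$ matrix to get an equivalent form with leading coefficient $m$, deduce that $D$ is a square modulo $4m$, and then rule out $p \mid m$ by a local quadratic-residue argument (including the correct reading of $\left(\frac{D}{2}\right)=-1$ as $D \equiv \pm 3 \pmod 8$, against squares $0,1,4$ modulo $8$), so that $v_p(n)=2v_p(e)$ is even, contradicting $\alpha$ odd. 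What each approach buys: the paper's proof is shorter and immediately gives the stronger statement $R_D(n)=0$, but it rests on a cited theorem whose own proof is not elementary; yours is self-contained, using only B\'ezout, invariance of the discriminant under unimodular substitution, and quadratic residues, at the cost of the case distinction at $p=2$ and the reduction to primitive representations. Both are complete proofs of the lemma.
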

\begin{proof}
Let $R_D(n)$ be the total number of representations of $n$ by any binary
quadratic form of discriminant $D$. Then by Theorem 3 in
\cite[\S 8]{Zagier}, we have
\begin{equation}
\label{artie_shaw}
  R_D(n) = \sum_{\ell \mid n} \left( \frac{D}{\ell} \right).
\end{equation}
Since $(\frac{D}{\ell})$ is multiplicative in $\ell$, $R_D(n)$ is
multiplicative in $n$ as well. Now $\alpha$ is odd and $(\frac{D}{p})=-1$,
whence $R_D(p^\alpha)=0$. By multiplicativity, as
$p^\alpha$ exactly divides $n$, also $R_D(n)=0$, so indeed
$n$ is not represented by any binary quadratic form of discriminant $D$.
\end{proof}

Next, define the sequence $\ell_i$ recursively by
\begin{align*}
  \ell_1 & = 1,\\
  \ell_i & =\min\{ \ell \in \N: (\ell, D)=1 \; \text{and} \;
   \ell>\ell_{i-1}\} \quad (i>1).
\end{align*}
Further, for $i \in \N$ define
\begin{equation}
\label{brunnen}
  T_i=\{x \in (\Z/|D|\Z)^*: \ell_j x \equiv r \pmod {|D|}
  \; \text{for some} \; j \le i\},
\end{equation}
let $\pi$ be the projection
\[
  \pi : \Z \rightarrow \Z/|D|\Z,
\]
and introduce the abbreviations
\[
  t = \varphi(|D|)
\]
and
\[
  L=|D|(k+1).
\]
Moreover, fix $\varepsilon>0$ and in terms of $\varepsilon$ and $|D|$
fix a sufficiently large positive integer $k$, and for prime $p$ let
\[
  \beta(p) = \max_{p^m \le L} m.
\]
Finally, define
\begin{equation}
\label{freibad}
  P = \prod_{\substack{p_t \le L/\ell_t:\\(p_t, D)=1}}
      p_t^{\beta(p)+1} \quad
      \prod_{i=1}^{t-1} \quad
      \prod_{\substack{L/\ell_{i+1} < p_i \le L/\ell_i:\\
                          \pi(p_i) \in T_i}} p_i^{\beta(p)+1},
\end{equation}
where $p_1, \ldots, p_t$ denote prime numbers. Now
\[
  T_i \subset T_{i+1} \quad \text{and} \quad l_i \ge i \quad (i \in \N),
\]
hence
\begin{align*}
  \{p \le L/\ell_t: (p, D)=1\} & \subset \{p \le L/t:(p,D)=1\},\\
  \{L/\ell_t<p \le L/\ell_{t-1}:\pi(p) \in T_{t-1}\}
  & \subset \{p \le L/t:(p,D)=1\} \\
  & \quad\ \cup \{L/t<p \le L/(t-1):\pi(p) \in T_{t-1}\},\\
  \{L/\ell_{t-1}<p \le L/\ell_{t-2}:\pi(p) \in T_{t-2}\}
  & \subset 
  \{L/(t-1)<p \le L/(t-2):\pi(p) \in T_{t-2}\}\\
  & \quad \ \cup \{L/t<p \le L/(t-1):\pi(p) \in T_{t-1}\}\\
  & \quad \ \cup \{p \le L/t: (p, D)=1\}, 
\end{align*}
and so on. Therefore $P$ can be bounded above by
\[
  \prod_{\substack{p \le L/t:\\(p, D)=1}} p^{\beta(p)+1}
      \prod_{i=1}^{t-1}
         \prod_{\substack{L/(i+1) < p \le L/i:\\
                          \pi(p) \in T_i}} p^{\beta(p)+1}.
\]
 Using the observations
\[
  p^{\beta(p)+1} \le L^2
\]
and
\[
  \#T_i = i \quad (i \le t)
\]
together with the prime number theorem in arithmetic progressions,
we obtain
\[
  P \le L^{2(1+\varepsilon)\alpha}
\]
where
\begin{align*}
  \alpha & = \frac{L/t}{\log L/t} + \frac{L}{t}
    \sum_{i=1}^{t-1} \frac{i \left(1/i-1/(i+1)\right)}
      {\log(L/i-L/(i+1))}\\
  & = \frac{L/t}{\log L/t} + \frac{L}{t}
    \sum_{i=1}^{t-1} \frac{1/(i+1)}
      {\log (L/(i(i+1)))}\\
  & \le \frac{L/t}{\log L/t} + \frac{L}{t}
    \frac{1}{\log (L/(t(t+1))} \sum_{i=1}^{t-1} \frac{1}{i+1}\\
  & \le \frac{L/t}{\log L/t} + \frac{L}{t}
    \frac{\log t}{\log (L/(t(t+1)))}\\
  & \le \frac{L/t}{\log (L/(t(t+1)))} (1+\log t).\\
\end{align*}
With
\[
  \lim_{k \rightarrow \infty} \frac{\log L}{\log (L/(t(t+1)))} = 1,
\]
we obtain, for $k$ sufficiently large in terms of $|D|$ and 
$\varepsilon$,
\[
  P \le \exp \left( 2 (1+2\varepsilon) \frac{L}{t}(1+\log t) \right),
\]
and from this
\begin{equation}
\label{sonne}
  \frac{k+1}{\log P} \ge \frac{\varphi(|D|)}
  {2(1+2\varepsilon)|D|(1+\log \varphi(|D|)}.
\end{equation}
Now choose $y \in \{1, \ldots, P\}$ such that
\[
  |D|y \equiv r \pmod P
\]
which is possible as $(D,P)=1$ by definition \eqref{freibad} of $P$.
We claim that none of the numbers $y+1, \ldots, y+k$ can be represented
by any binary quadratic form of discriminant $D$, which together with
$y \le P$ and
\eqref{sonne} proves the theorem. To settle the claim, fix
$j \in \{1, \ldots, k\}$. Now
\begin{equation}
\label{serious}
  |D|(y+j) \equiv |D|j+r \pmod P.
\end{equation}
Since
\[
  -1=\left(\frac{D}{r}\right)=\left(\frac{D}{|D|j+r}\right)
\]
by Lemma \ref{hitze}, we conclude that $|D|j+r$ must be divisible by a
prime $p$ with $(D/p)=-1$ to an odd power $\gamma$ at most $\beta(p)$:
Writing
\[
  |D|j+r=p^\gamma \ell
\]
where $\ell$ is a certain positive integer
coprime to $D$ and $p$, we find that $|D|j+r \le |D|(k+1)=L$, so
$\gamma \le \beta(p)$.
If $\gamma \ge 3$, then $p \le L^{1/3}$.
As $L/\ell_t \ge L^{1/3}$ for sufficiently large $k$ (in terms of $D$),
by \eqref{freibad}
we can then assume that $p^{\beta(p)+1}$ divides $P$.
If $\gamma=1$, then $|D|j+r=p\ell$, so $p \le L$.
Moreover, if $L/\ell_{i+1}<p\le L/\ell_i$, then $\ell \le \ell_i$,
so $\pi(p) \in T_i$ by definition \eqref{brunnen} of $T_i$, whence
$p^{\beta(p)+1}$ again divides $P$ by \eqref{freibad}
as well as in case $p \le L/\ell_t$,
once more by definition \eqref{freibad}.
Using \eqref{serious}, we conclude that $p$ divides $y+j$ to
an odd power, so as $(D/p)=-1$ by Lemma \ref{welle} the number
$y+j$ indeed cannot be represented by any binary
quadratic form of discriminant $D$.

\section{Long gaps between sums of two squares in sparse sequences}
In order to prove Theorem \ref{shift}, we need the following auxiliary
result.
\begin{lemma}
\label{32grad}
Let $d, d'$ and $r$ be as in the statement of Theorem \ref{shift}, and let
$j$ be a positive integer. Then
\[ \gcd \left( \frac{(4d'j)^d-1}{4d'j-1},4d'j-1 \right)=1. \]
\end{lemma}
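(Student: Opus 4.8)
The plan is to introduce the abbreviation $m = 4d'j$, so that $4d'j - 1 = m - 1$, and to exploit the elementary factorization
\[
  \frac{(4d'j)^d - 1}{4d'j - 1} = \frac{m^d - 1}{m - 1}
  = 1 + m + m^2 + \cdots + m^{d-1}.
\]
The first step is to reduce this expression modulo $m - 1 = 4d'j - 1$. Since trivially $m \equiv 1 \pmod{m-1}$, every power satisfies $m^i \equiv 1 \pmod{m-1}$, so the geometric sum collapses to
\[
  \frac{(4d'j)^d - 1}{4d'j - 1} \equiv d \pmod{4d'j - 1}.
\]
This immediately reduces the whole statement to the identity
\[
  \gcd\!\left( \frac{(4d'j)^d - 1}{4d'j - 1}, \, 4d'j - 1 \right)
  = \gcd(d, 4d'j - 1),
\]
so it remains only to verify that $d$ and $4d'j - 1$ are coprime.

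The second step is to check $\gcd(d, 4d'j - 1) = 1$ using the decomposition $d = 2^r d'$. The integer $4d'j - 1$ is odd, hence shares no factor with $2^r$. For the odd part $d'$, any common divisor $g$ of $d'$ and $4d'j - 1$ divides $4d'j$ (because $g \mid d' \mid 4d'j$) and also divides $4d'j - 1$, hence divides their difference $1$; therefore $\gcd(d', 4d'j - 1) = 1$. Combining the two coprimality statements yields $\gcd(2^r d', 4d'j - 1) = 1$, which is exactly the assertion.

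The argument is short and I do not anticipate any genuine obstacle; the only point worth flagging is the congruence $\frac{m^d - 1}{m - 1} \equiv d \pmod{m - 1}$, which is the crux and converts the lemma into the elementary coprimality check above.
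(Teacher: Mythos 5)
Your proof is correct, and it takes a genuinely different, shorter route than the paper's. The paper sets $a=4d'j$ and splits the cofactor as $\frac{(a^{d'})^{2^r}-1}{a^{d'}-1}\cdot\frac{a^{d'}-1}{a-1}$, handling the two pieces separately: the first is factored as $\prod_{u=0}^{r-1}\bigl(a^{2^ud'}+1\bigr)$ and shown coprime to $a^{d'}-1$ (each factor is at distance $2$ from the multiple $a^{2^ud'}-1$ of $a^{d'}-1$ and is odd since $a$ is even), while the second is reduced modulo $a-1$ via an explicit polynomial identity to the residue $d'$, after which $\gcd(d',4d'j-1)=1$ finishes. Your argument subsumes both parts in one stroke: reducing the full geometric sum $\sum_{i=0}^{d-1}m^i$ modulo $m-1$ gives the residue $d$, so the lemma collapses to the single check $\gcd(d,4d'j-1)=1$, where the oddness of $4d'j-1$ disposes of the factor $2^r$ (playing the role of the paper's Part 1) and $d'\mid 4d'j$ disposes of the factor $d'$ (the paper's Part 2). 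What your version buys is the clean general identity $\gcd\bigl((m^d-1)/(m-1),\,m-1\bigr)=\gcd(d,m-1)$, valid for all $m$ and $d$, which makes transparent that the hypotheses $d=2^rd'$ with $d'$ odd and the shift $4d'j$ are needed only to guarantee $\gcd(d,4d'j-1)=1$; the paper's decomposition into Fermat-type factors records extra structural information about the cofactor, but none of it is actually used.
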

\begin{proof}
Let $a=4d'j$. Then the claim is
\[
  \gcd \left( \frac{(a^{d'})^{2^r}-1}{a-1},a-1 \right)=1.
\]
We prove this in two parts, first
\[
  \gcd\left( \frac{(a^{d'})^{2^r}-1}{a^{d'}-1},a-1\right)=1,
\]
and then
\[
  \gcd\left( \frac{a^{d'}-1}{a-1},a-1\right)=1.
\]
\textbf{Part 1:} Observe that
\[
  \gcd \left(\frac{(a^{d'})^{2^r}-1}{a^{d'}-1},a-1\right)
  \leq \gcd\left( \frac{(a^{d'})^{2^r}-1}{a^{d'}-1},a^{d'}-1\right),
\]
and therefore it is enough to prove that
\[
  \gcd \left( \frac{(a^{d'})^{2^r}-1}{a^{d'}-1},a^{d'}-1\right)=1.
\]
Now
\[ \frac{(a^{d'})^{2^r}-1}{a^{d'}-1}=\sum_{i=0}^{2^r-1}a^{d'i}=\prod_{u=0}^{r-1}(a^{2^ud'}+1), \]
as every integer $i$ can be written as a sum of powers of $2$.
Each factor $a^{2^ud'}+1$ is coprime to $a^{d'}-1$, as the integer $a^{2^ud'}-1$ is a multiple of $a^{d'}-1$,
at distance $2$ to $a^{2^ud'}+1$, and recall that $a^{d'}$ is even.

\smallskip
\noindent
\textbf{Part 2:}
We have the identity
\[ \begin{array}{rlll}
   d'+\sum_{i=0}^{d'-1} i x^{d'-1-i}(x-1) &=
   d'+x^{d'-1}+&2x^{d'-2}
+\ldots+(d'-1)x\\
   &&-x^{d'-2}
-\ldots-(d'-2)x-(d'-1)\\
   &=\sum_{i=0}^{d'-1} x^i.&
\end{array}\]
By this identity, applied with $a=4d'j=x$, a division of
$\frac{a^{d'}-1}{a-1}=\sum_{i=0}^{d'-1} a^i$ by $(a-1)$ leaves the remainder $d'$.
As $\gcd(4d'j-1,d')=1$ the claim follows.
\end{proof}
\begin{proof}[Proof of Theorem \ref{shift}]
It is enough to show the following:
For fixed $\varepsilon>0$, for all sufficiently large $k$ there
exists a positive integer
$y=y_k \le \exp ((1+\varepsilon)4kd')$ such that
none of the integers $y+f_d(j), 1\leq j \leq k$ is a sum of two squares.
Fix the gap size $k$.
For each prime $p \leq 4kd', p \equiv
3 \pmod 4$ let $\beta=\beta(p)$ be the highest power with
$p^{\beta}\leq 4kd'$. Let
\[
  P=\prod_{p \leq 4kd',\, p \equiv 3 \pmod 4} p^{\beta(p)+1}.
\]
Define $y \in \{1, \ldots, P\}$ by $(4d')^dy\equiv -1 \pmod P$.
Then none of the integers in
\[
  I=\{y+f_d(1), \ldots ,y+f_d(k)\}
\]
is the sum of two squares.
By the prime number theorem in arithmetic progressions,
\[
  P<(4kd')^{2 (1+\varepsilon)(2kd'/\log 4kd')}=\exp ((1+\varepsilon)4kd'),
\]
whence
\[
  k>\frac{\log P}{4d'(1+\varepsilon)}.
\]
Since $(4d')^dy \equiv -1 \pmod P$, we have that
\[(4d')^d(y+f_d(j))\equiv f_d(4d'j)-1 \pmod P, \text{ for } 1\leq j \leq k.\]
Now $\delta=4d'j-1$ is a divisor of $f_d(4d'j)-1$ and therefore
exactly divisible by some prime power
$p^{\alpha}$ with $p \equiv 3 \pmod 4$ and $\alpha$ odd.
By lemma \eqref{32grad}
the codivisor $\frac{(4d'j)^d-1}{4d'j-1}$ is coprime to $\delta$,
which implies that $f_d(4d'j)-1$ is exactly divisible by this
prime power $p^{\alpha}$.
As $\alpha \leq \beta(p)$, and $P$ is divisible by $p^{\beta+1}$, it follows
that $y+f_d(j)$ is also exactly divisible by $p^{\alpha}$, and is therefore 
not the sum of two squares.
\end{proof}

\bibliography{largegaps}
\bibliographystyle{plain}

\appendix
\section{Table of abbreviations}
\label{smalltalk}
In this appendix we briefly collect some notation used in the proof
of Theorem~\ref{staines} in section \ref{regen}. First recall the
maps
\[
  \pi_\ell : \Z \rightarrow \{1, \ldots, 2^\ell\}, \quad
  x \mapsto x \pmod {2^\ell}
\]
and
\[
  \tau : \Z \rightarrow \Z; \quad j \mapsto 4j-1.
\]
The sets $S_\ell$, $T_\ell$, $U_\ell$, $V_\ell$, $W_\ell$ and
$R_\ell$ are then defined by
\begin{align*}
  S_l & = \{2^ab \in \{1, \ldots, 2^\ell\}:
  a \le \ell-2, b \equiv 3 \pmod 4\} \quad (\ell \ge 2),\\
  T_{\ell+2} & = \pi_{\ell+2}(\tau(S_\ell)) \quad (\ell \ge 2),\\
  U_3 &= \{3\},\\
  U_\ell & = U_{\ell-1} \cup \{u+2^{\ell-1}: u \in U_{\ell-1}\}
  \cup \{3 \times 2^{\ell-2}\} \quad (\ell \ge 4),\\
  V_\ell &= \{s \in S_\ell: \pi_{\ell+2}(5\tau(s)) \in T_{\ell+2}\}
  \quad (\ell \ge 2),\\
  W_5 &=\{24\},\\
  W_\ell &= W_{\ell-1} \cup \{u+2^{\ell-1}: u \in W_{\ell-1}\}
  \cup \{3 \times 2^{\ell-2}\} \quad (\ell \ge 6),\\
  R_\ell &= \{s \in S_\ell: \pi_{\ell+2}(5\tau(s)) \in T_{\ell+2}
  \text{ and } \pi_{\ell+2}(9\tau(s)) \in T_{\ell+2}\} \quad
  (\ell \ge 2).
\end{align*}
Note the inclusions
\begin{align*}
  R_\ell & \subset V_\ell \subset S_\ell \quad (\ell \ge 5),\\
  W_\ell & \subset U_\ell \subset S_\ell \quad (\ell \ge 5),\\
  U_\ell & \subset V_\ell \quad (\ell \ge 3),\\
  W_\ell & \subset R_\ell \quad (\ell \ge 5).
\end{align*}
\section{Some examples for the sets of residue classes introduced
in section \ref{regen}}
\label{english}
We have
\begin{align*}
S_2&=\{3\},\\
S_3&=\{3,6,7\},\\
S_4&=\{3,6,7,11,12,14,15\},\\
S_5&=\{3, 6, 7, 11, 12, 14, 15, 19, 22, 23, 24, 27, 28, 30, 31\},\\
T_4&=\{11\},\\
T_5&=\{11,23,27\},\\
T_6&=\{11,23,27,43,47,55,59\},\\
T_7&=\{11, 23, 27, 43, 47, 55, 59, 75, 87, 91, 95, 107, 111, 119, 123 \},\\
U_3&=\{3\}=V_3 \subset S_3,\\
U_4&=\{3,11,12\}=V_4 \subset S_4,\\
U_5&=\{3,11,12,19,24,27,28\}=V_5 \subset S_5,\\
\pi_5(\tau(U_3))&=\{11\} \subset T_5,\\
\pi_6(\tau(U_4))&=\{11, 43, 47\} \subset T_6,\\
\pi_7(\tau(U_5))&=\{11, 43, 47, 75, 95, 107, 111\} \subset T_7,\\
W_5&=\{24\}=R_5,\\
\pi_7(\tau(W_5))&=\{95\} \subset \pi_7(\tau(U_5))
\subset T_7.
\end{align*}
\end{document}